\newcommand{\myemph}{\emph}
\newcommand{\defeq}{=_{\mathrm{def}}}
\newcommand{\Set}{\mathbf{Set}}
\newcommand{\Cat}{\mathbf{Cat}}
\newcommand{\Grph}{\mathbf{Grph}}
\newcommand{\Poly}{\mathbf{Poly}}
\newcommand{\PolyMnd}{\mathbf{PolyMnd}}
\newcommand{\PolyEnd}{\mathbf{PolyEnd}}
\newcommand{\Sp}{\mathbf{Span}}
\newcommand{\Span}{\mathbf{Span}}
\newcommand{\bbC}{\mathbb{C}}
\newcommand{\catE}{\mathcal{E}}
\newcommand{\catK}{\mathcal{K}}
\newcommand{\Obj}{\mathrm{Obj}}
\newcommand{\Hor}{\mathrm{Hor}}
\newcommand{\Ver}{\mathrm{Ver}}
\newcommand{\Sq}{\mathrm{Sq}}
\newcommand{\iso}{\cong}
\newcommand{\End}{\mathrm{End}}
\newcommand{\Mnd}{\mathrm{Mnd}}
\newcommand{\Und}{\mathrm{Und}}
\newcommand{\Inc}{\mathrm{Inc}}
\newcommand{\Qalg}{Q\text{-}\mathrm{alg}}
\newcommand{\QAlg}{Q\text{-}\mathrm{Alg}}
\newcommand{\QQalg}{Q^*\text{-}\mathrm{Alg}}
\newcommand{\ladj}[1] {{#1}_{!}}
\newcommand{\radj}[1] {{#1}_{\ast}}
\newcommand{\pbk}[1] {{#1}^{\ast}}
\newcommand{\xycenter}[1]{\vcenter{\hbox{\xymatrix{#1}}}}
\newcommand{\drpullback}[1][dr]{\save*!/#1-1.2pc/#1:(-1,1)@^{|-}\restore}
\newtheorem{thm}{Theorem}[section]
\newtheorem{lem}[thm]{Lemma}
\newtheorem{prop}[thm]{Proposition}
\theoremstyle{definition}
\newtheorem{defn}[thm]{Definition}
\newtheorem{rmk}[thm]{Remark}
\newtheorem{examp}[thm]{Example}
\begin{document}

% \runningtitle{MONADS IN DOUBLE CATEGORIES}
\title{Monads in Double Categories}

\author[T. M. Fiore]{Thomas M. Fiore}
\address{Department of Mathematics and Statistics, University of
Michigan-Dearborn, 4901 Evergreen Road, Dearborn, MI 48128, USA}
\email{tmfiore@umd.umich.edu}

\author[N. Gambino]{Nicola Gambino}
\address{Dipartimento di Matematica e Informatica, via Archirafi 34, 90123 Palermo, Italy \\ and 
School of Mathematics, The University of Manchester, Oxford Road, Manchester M13 9PL, UK}
\email{ngambino@math.unipa.it}

\author[J. Kock]{Joachim Kock}
\address{Departament de Matem\`atiques,
Universitat Aut\`onoma de Barcelona, 08193 Bellaterra (Barcelona),
Spain} \email{kock@mat.uab.cat}

% \authorheadline{T. M. Fiore, N. Gambino and J. Kock}

\keywords{Double categories, fibred bicategories, monads}

\begin{abstract}
We extend the basic concepts of Street's formal theory of monads from the setting of 2-categories to that
of double categories. In particular, we introduce the double category $\Mnd(\bbC)$ of monads in a double
category~$\bbC$ and define what it means for a double category to admit the construction of free monads.
Our main theorem shows that, under some mild conditions, a double category that is a framed bicategory
admits the construction of free monads if its horizontal 2-category does. We apply this result to
obtain double adjunctions which extend the adjunction between graphs and categories and the adjunction
between polynomial endofunctors and polynomial monads.
\end{abstract}

\subjclass[2000]{18D05, 18C15}

\date{July 11th, 2010}

\maketitle

\section*{Introduction}

The development of the formal theory of monads, begun in~\cite{StreetR:fortm} and continued
in~\cite{LackS:fortm}, shows that much of the theory of monads~\cite{BarrM:toptt}
can be generalized from the setting of the 2-category~$\Cat$ of small categories, functors and natural
transformations to that of a general 2-category. The generalization, which involves defining the
2-category~$\Mnd(\catK)$ of monads, monad maps and monad 2-cells in a 2-category~$\catK$,
is useful to study homogeneously a variety of important
mathematical structures. For example, as explained in~\cite{LeinsterT:higohc},
categories, operads,  multicategories and~$T$-multicategories can
all be seen as monads in appropriate bicategories. However, the most
natural notions of a morphism between these mathematical structures do not appear as instances of the notion
of a monad map. For example, it is well known that, while
categories can be viewed as monads in the bicategory of
spans~\cite{BenabouJ:intb}, functors are not monad maps
therein.

To address this issue, we define the double category~$\Mnd(\bbC)$
of monads, horizontal monad maps, vertical monad maps and
monad squares in a double category~$\bbC$. Monads and horizontal
monad maps in~$\bbC$ are exactly monads and monad maps
in the horizontal 2-category of~$\bbC$, while the definitions of vertical monad maps and
monad squares in~$\bbC$  involve vertical arrows of~$\bbC$ that are
not necessarily identities. This combination of horizontal and
vertical arrows of~$\bbC$ in the definition of~$\Mnd(\bbC)$ allows
us  to describe mathematical structures and morphisms between them
as monads and vertical monad maps  in appropriate double
categories. For example, small categories and functors can be viewed as
monads and vertical monad maps in the the double category of spans.

For a double category $\bbC$, we define also the double category~$\End(\bbC)$ of endomorphisms, horizontal endomorphism
maps, vertical endomorphism maps and endomorphism squares. The double categories $\Mnd(\bbC)$ and $\End(\bbC)$ are
related by a forgetful double functor~$U : \Mnd(\bbC) \rightarrow \End(\bbC)$, mapping a monad to its underlying
endomorphism. By definition, a double category~$\bbC$ is said to admit the construction of free monads if~$U$ has a
vertical left adjoint. In view of our applications, we consider the construction of free monads in double categories
that satisfy the additional assumption of being framed bicategories, in the sense of~\cite{ShulmanM:frabmf}. Our main result shows that a
framed bicategory satisfying some mild assumptions admits the construction of free monads if its
horizontal 2-category does. Here, the notion of a 2-category admitting the construction of free monads is obtained by
generalizing the characterization of the free monads in the 2-category~$\Cat$ obtained in~\cite[\S6.1]{StatonS:namppc}.

We apply the general theory to the study of two free constructions. First, we consider the construction of the free
category on a graph (relatively to a category with finite limits), which plays an important role in Joyal's abstract
treatment of G\"odel's incompleteness theorems~\cite{MaiettiM:joyaul}. We show that if $\catE$ is a pretopos with
parametrized list objects, then the double category of spans in $\catE$ admits the construction of free monads. Secondly,
we consider the construction of the free monad on a polynomial endofunctor (relatively to a locally cartesian closed
category, which is always assumed here to have a terminal object), which contributes to the category-theoretic analysis of Martin-L\"of's types of wellfounded trees,
begun in~\cite{MoerdijkI:weltc} and continued in~\cite{GambinoN:weltpf,GambinoN:polfpm}. We show that if $\catE$
is a locally cartesian closed category with finite disjoint coproducts and W-types, then the double category of
polynomials in $\catE$ admits the construction of free monads. Both of these results are obtained by
application of our main result, which is possible since the double categories of interest are framed bicategories.
Examples of categories $\catE$ satisfying the hypotheses above abound: for example, every elementary topos
with a natural numbers object is both a pretopos with parametrized list objects and a locally cartesian closed
category with finite disjoint coproducts and W-types~\cite{MoerdijkI:weltc}. Thus, our theory applies in particular
to the category~$\Set$ of sets and functions and to categories of sheaves.

The double categories of spans and of polynomials are defined so that if we consider the vertical part of the
free monad double adjunction, we recover exactly the adjunction between graphs and
categories~\cite[\S II.7]{MacLaneS:catwm}
and the adjunction between polynomial endofunctors and polynomial monads~\cite[\S4.6]{GambinoN:polfpm}. Hence, we both strengthen these adjointness results and put them in a
general context. Indeed, one of the original motivations for the research presented here was to make precise the
analogy between the two constructions. In both cases, the application of our main theorem simplifies
a problem regarding double categories by reducing it to a question on 2-categories. Note, however, that the combination of
horizontal and vertical arrows is exploited essentially to recover the existing results, since the free monad
construction acts on endomorphisms (which are defined using horizontal arrows) but its universal property is
expressed with respect to vertical endomorphism maps.

Some double-categorical aspects of monads have also been investigated  within the theory of
$\mathrm{fc}$-multicategories in~\cite[Chapter~5]{LeinsterT:higohc}
and within the theory of framed bicategories
in~\cite[\S11]{ShulmanM:frabmf}. However, the notion of a horizontal monad map
considered there generalizes the ring-theoretic notion of a
bimodule, whereas our horizontal monad maps are essentially
 the monad maps of Street~\cite{StreetR:fortm}.

\subsection*{Plan of the paper.} Section~\ref{sec:mon2cat} discusses monads in a 2-category, recalling
some basic notions from~\cite{StreetR:fortm} and giving a characterization of the free monads in a 2-category.
Section~\ref{sec:mondc} introduces the double category~$\Mnd(\bbC)$ of monads in a double category~$\bbC$
and illustrates its definition with examples.
Section~\ref{sec:monfb} establishes some special properties of $\Mnd(\bbC)$ under the assumption that $\bbC$
is a framed bicategory. In particular, we state our main result, Theorem~\ref{thm:freemndfrmbicat}, and
apply it to our examples. Finally, Section~\ref{sec:technical} contains the proof of Theorem~\ref{thm:freemndfrmbicat}.

\section{Monads in a 2-category}
\label{sec:mon2cat}

\subsection*{Preliminaries.} We recall some definitions concerning endomorphisms, monads and their algebras in a
2-category. Let $\catK$ be a 2-category.  An
\myemph{endomorphism} in $\catK$ is a pair~$(X,P)$ consisting of
an object~$X$ and a 1-cell $P : X \rightarrow X$. An \myemph{endomorphism map}~$(F,\phi) : (X,P) \rightarrow
(Y,Q)$ consists of a 1-cell $F : X \rightarrow Y$ and a 2-cell~$\phi : QF \rightarrow FP$, which is not
required to satisfy any condition. An \myemph{endomorphism
2-cell}~$\alpha : (F, \phi) \rightarrow (F', \phi')$ is a 2-cell~$\alpha : F \rightarrow F'$ making the following
diagram commute
\[
\xymatrix{
QF \ar[r]^{\phi} \ar[d]_{Q \alpha} & FP \ar[d]^{\alpha P} \\
QF' \ar[r]_{\phi'} & F'P .}
\]
We write~$\End(\catK)$ or $\End_\catK$ for the 2-category of endomorphisms, endomorphism maps and
endomorphism 2-cells in~$\catK$.  There is a 2-functor $\Inc : \catK \rightarrow \End(\catK)$ which sends an object $X \in \catK$
to the identity endomorphism $(X, 1_X)$ on $X$. Let us now consider a fixed endomorphism $(Y,Q)$ in $\catK$.
For $X \in \catK$, the category~$\Qalg_X$ of~$X$-indexed~$Q$-algebras, in the sense of Lambek,  is defined  by letting
\[
\Qalg_X \defeq \End_\catK( (X, 1_X), (Y,Q)) \, .
\]
Explicitly, an~$X$-indexed
$Q$-algebra consists of a 1-cell $F : X \rightarrow Y$, called the underlying 1-cell of the algebra,
and a 2-cell $f : QF \rightarrow F$, called the structure map of the algebra. Note that the structure
map is not required to satisfy any conditions. These definitions extend
to a 2-functor
\[
\Qalg_{(-)} : \catK \rightarrow \Cat \, .
\]
We write $U_{(-)} : \Qalg_{(-)} \rightarrow \catK(-,Y)$ for the 2-natural transformations whose components
are the forgetful functors $U_X : \Qalg_X \rightarrow \catK(X,Y)$ mapping an $X$-indexed $Q$-algebra to
its underlying $1$-cell.

\medskip

We write~$\Mnd(\catK)$ or $\Mnd_\catK$ for the 2-category of monads, monad maps and monad~2-cells in~$\catK$, as
defined in~\cite{StreetR:fortm}. As usual, we refer to a monad by mentioning only its underlying endomorphism,
leaving implicit its multiplication and unit.  With a minor abuse of notation,
we write $\Inc : \catK \rightarrow \Mnd(\catK)$ for the 2-functor mapping an object~$X$ to
the monad~$(X, 1_X)$. If~$(Y,Q)$ is a monad, for every $X \in \catK$ we may consider
not only the category~$\Qalg_X$ of Lambek algebras for its underlying endomorphism, but also the category
$\QAlg_X$ of $X$-indexed Eilenberg-Moore $Q$-algebras, which is defined by letting
\[
\QAlg_X \defeq \Mnd_\catK((X, 1_X), (Y,Q)) \, .
\]
Note that we write~$\Qalg_X$ for the category of algebras for the endomorphism and~$\QAlg_X$ for the
category of Eilenberg-Moore algebras for the monad. Explicitly, an~$X$-indexed Eilenberg-Moore $Q$-algebra
consists of a 1-cell~$F : X \rightarrow Y$ and a~2-cell~$f : QF \rightarrow F$ satisfying the axioms
\[
\xymatrix{
QQF \ar[r]^{Q f} \ar[d]_{\mu_Q F}  & QF \ar[d]^{f} \\
QF \ar[r]_{f} & F , } \qquad
\xymatrix{
F \ar[r]^{\eta_Q F} \ar@/_1pc/[dr]_{1_F}  & QF \ar[d]^{f}  \\
 & F .}
\]
Again, these definitions extend to a 2-functor $\QAlg_{(-)} : \catK \rightarrow \Cat$ and there is a~2-natural transformation $U_{(-)} : \QAlg_{(-)} \rightarrow \catK(-,Y)$, with components given by the
evident forgetful functors. Since $(Y,Q)$ is assumed to be a monad, for every $X \in \catK$ the forgetful functor
$U_X : \QAlg_X \rightarrow \catK(X,Y)$ has a left adjoint, defined by composition with $Q : Y \rightarrow Y$.

\subsection*{A characterization of free monads.}
We  generalize the characterization of the free monad on an endomorphism given by Staton
in~\cite[Theorem~6.1.5]{StatonS:namppc} from the 2-category~$\Cat$ to an arbitrary 2-category $\catK$. The
generalization is essentially straightforward, but we indicate the main steps of the proof. See~\cite[\S9.4]{BarrM:toptt}
for background material on free monads and~\cite{KellyGM:unittc} for a
general account of several examples of the free monad construction.

\begin{thm} \label{thm:frem2cat}
Let~$(Y,Q)$ be an endomorphism in a 2-category $\catK$. For a monad~$(Y,Q^*)$ and a 2-cell~$\iota_Q : Q \rightarrow Q^*$,
the following conditions are equivalent.
\begin{enumerate}[(i)]
\item The endomorphism map $(1_Y, \iota_Q) : (Y, Q^*) \rightarrow (Y,Q)$ is universal, in the sense that for every
monad~$(X,P)$, composition with $(1_Y, \iota_Q)$ induces an isomorphism fitting in the diagram
 \begin{equation}
 \label{equ:iso2cat}
 \xycenter{
\Mnd_\catK( (X,P), (Y, Q^*) ) \ar[rr]^{\iso} \ar[dr] & & \End_\catK( (X,P), (Y,Q)) \ar[dl] \\
 & \catK(X,Y), & }
 \end{equation}
where the downward arrows are the evident forgetful functors.
\item The 2-cell $\nu_{Q^*} : Q Q^* \rightarrow Q^*$, defined as the composite
\begin{equation}
\label{equ:nu}
\xymatrix@C=8ex{ Q Q^*  \ar[r]^{\iota_{Q} Q^* } & Q^* Q^*  \ar[r]^{\mu_{Q^*} } & Q^* , }
\end{equation}
equips $Q^*$ with a universal $Q$-algebra structure, in the sense that
for every $X \in \catK$, the functor $\catK(X,Y) \rightarrow \Qalg_X$ defined by mapping
$F : X \rightarrow Y$ to the $Q$-algebra with underlying 1-cell $Q^* F$ and structure map the 2-cell
$\nu_{Q^*} F : Q Q^* F \rightarrow Q^* F$, is left adjoint to the forgetful functor $U_X : \Qalg_X \rightarrow \catK(X,Y)$.
\end{enumerate}
\end{thm}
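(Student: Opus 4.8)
The plan is to prove the equivalence of (i) and (ii) by unwinding both sides to the same concrete data and using the standard adjunction-from-universal-arrow machinery. The key observation is that both conditions are ``pointwise'' statements indexed over objects $X \in \catK$, so it suffices to work with a fixed $X$ and then check 2-naturality at the end. First I would note that the commuting triangle \eqref{equ:iso2cat} over $\catK(X,Y)$ says precisely that for each $1$-cell $F : X \to Y$, the functor induced on fibres is an isomorphism; but the fibre of $\End_\catK((X,P),(Y,Q)) \to \catK(X,Y)$ over $F$ is the set of $2$-cells $QF \to FP$, while the fibre of $\Mnd_\catK((X,P),(Y,Q^*)) \to \catK(X,Y)$ over $F$ is the set of $2$-cells $Q^*F \to FP$ satisfying the two monad-map axioms. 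So (i) amounts to saying: precomposition with $\iota_Q F : QF \to Q^*F$ is a bijection between monad-map $2$-cells $Q^*F \to FP$ and arbitrary $2$-cells $QF \to FP$, naturally in $F$.

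Next I would recast (ii) in the same terms. Giving a left adjoint to $U_X : \Qalg_X \to \catK(X,Y)$ whose value at $F$ has underlying $1$-cell $Q^*F$ and structure map $\nu_{Q^*}F$ is, by the universal-arrow formulation of adjunctions, equivalent to exhibiting for each $F$ a unit map $F \to U_X(Q^*F, \nu_{Q^*}F)$ in $\catK(X,Y)$ — here this unit is $\iota_Q F : QF \to \dots$ wait, more precisely the component $\eta_F : F \to Q^*F$ coming from the monad unit $\eta_{Q^*}F$ — that is universal among maps from $F$ into $Q$-algebras. Concretely, universality says: for every $Q$-algebra $(G, g : QG \to G)$ with $G : X \to Y$ and every $2$-cell $F \to G$, there is a unique $Q$-algebra map $(Q^*F, \nu_{Q^*}F) \to (G,g)$ extending it. The plan is then to observe that $Q$-algebras with underlying $1$-cell factoring through an arbitrary $G$ are the same thing as endomorphism maps $(X,1_X) \to (Y,Q)$ with a chosen $2$-cell to a fixed target, i.e. objects of $\Qalg_X$, while $Q$-algebra maps correspond to endomorphism $2$-cells; and that a $Q$-algebra $(G,g)$ with a map $F \to G$ packages exactly the data of an endomorphism map $(F, \text{something}) : (X,P) \to (Y,Q)$ when we additionally carry the $P$-structure — this is where one unwinds how $P$ enters. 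Here I would lean on the already-developed dictionary: $\Qalg_X = \End_\catK((X,1_X),(Y,Q))$ and $\QAlg_X = \Mnd_\catK((X,1_X),(Y,Q))$, generalized to arbitrary base monad $(X,P)$ in place of the identity $(X,1_X)$.

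The technical heart — and the main obstacle — is matching the two universal properties through the composite $2$-cells \eqref{equ:nu}. One direction: assuming (i), I would take the iso \eqref{equ:iso2cat}, specialize its inverse to produce, from an arbitrary endomorphism map, a unique monad map; transporting along the forgetful functors and chasing the definition of $\nu_{Q^*}$ as $\mu_{Q^*} \circ \iota_Q Q^*$, this yields exactly the unique algebra-map extension required by (ii), where one must verify that the candidate extension really is a $Q$-algebra homomorphism — this reduces to the monad-map axioms for the $2$-cell produced by the iso, combined with the interchange law and the triangle/associativity identities for $(Q^*, \mu_{Q^*}, \eta_{Q^*})$. Conversely, assuming (ii), the universal $Q$-algebra structure on $Q^*$ gives, for each $F$ and each $2$-cell $QF \to FP$ (viewed as a $Q$-algebra structure on $FP$ via $P$, i.e. pulling back along $P$), a unique algebra map, and one checks this unique map is automatically a monad map — again via diagram chases with $\mu_{Q^*}$, $\eta_{Q^*}$, $\iota_Q$, and the interchange law. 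I expect the bookkeeping of how the right-hand object $P$ threads through (it is a monad, not just an endomorphism, so its multiplication and unit appear in the monad-map axioms) to be the fussiest part; everything else is formal manipulation of pasting diagrams. Finally I would remark that $2$-naturality in $X$ is inherited for free, since all constructions are built by composition with fixed $1$- and $2$-cells, and cite \cite[Theorem~6.1.5]{StatonS:namppc} and \cite[\S9.4]{BarrM:toptt} for the prototype in $\catK = \Cat$.
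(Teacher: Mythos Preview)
Your proposal is essentially correct and aligned with the paper's approach; in particular, your treatment of (ii)$\Rightarrow$(i) matches the paper exactly: one equips $FP$ with the $Q$-algebra structure $QFP \xrightarrow{\phi P} FPP \xrightarrow{F\mu_P} FP$, pairs it with the map $F\eta_P : F \to FP$, and invokes the adjunction in (ii) to produce the unique $Q$-algebra map $\phi^\sharp : Q^*F \to FP$; that this is a monad map and inverts composition with $\iota_Q$ is then the verification you anticipate.

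For (i)$\Rightarrow$(ii), however, the paper's argument is considerably cleaner than what you sketch, and you should adopt it. The point is that (ii) concerns only the case $P = 1_X$, so one specialises the isomorphism in \eqref{equ:iso2cat} to $(X,P) = (X,1_X)$, obtaining $\QQalg_X \cong \Qalg_X$ over $\catK(X,Y)$. Now the crucial observation: because $Q^*$ is a \emph{monad}, the forgetful functor $\QQalg_X \to \catK(X,Y)$ \emph{always} has a left adjoint, namely composition with $Q^*$ (this is the free Eilenberg--Moore algebra functor, which exists for any monad). Transporting this left adjoint through the isomorphism yields the left adjoint to $U_X : \Qalg_X \to \catK(X,Y)$ in one step; unwinding the composite shows it sends $F$ to $(Q^*F, \nu_{Q^*}F)$, as required. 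This bypasses the direct verification you propose and explains why no ``bookkeeping of how $P$ threads through'' is needed in this direction: $P$ is trivial here.
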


\begin{proof} To see that (i) implies (ii), consider the following diagram:
\[
\xymatrix{
\QQalg_X \ar[rr]^{\iso} \ar@{=}[d] & &  \Qalg_X \ar@{=}[d] \\
\Mnd_\catK( (X,1_X), (Y, Q^*) )  \ar[dr] \ar[rr]^{\iso} & & \End_\catK( (X,1_X), (Y,Q)) \ar[dl] \\
 & \catK(X, Y), }
 \]
 where the bottom triangular diagram is an instance of the diagram in~\eqref{equ:iso2cat}.
The functor defined in~(ii) is left adjoint to the forgetful functor
$U_X : \Qalg_X \rightarrow \catK(X,Y)$ since it is exactly the composite of the left adjoint
$\catK(X,Y) \rightarrow \QQalg_X$, which is given by composition with~$Q^*$ (since $Q^*$ is a monad), with the
isomorphism $\QQalg_X \rightarrow \Qalg_X$, which is defined by composition with~$\iota_Q$.

\smallskip

For the proof that~(ii) implies~(i), we need to define an isomorphism as in~\eqref{equ:iso2cat}.
Given an endomorphism map $(F, \phi) : (X,P) \rightarrow (Y,Q)$,
where~$\phi : QF \rightarrow FP$, we need to define a monad map $(F, \phi^\sharp) : (X,P) \rightarrow (Y,Q^*)$,
where~$\phi^\sharp : Q^*F \rightarrow FP$. For this, we exploit the adjointness in~(ii). Note that the
left adjoint to $\Qalg_X \rightarrow \catK(X,Y)$ sends~$F$ to the $Q$-algebra with underlying 1-cell $Q^*F$
and structure map $\nu_{Q^*} F : Q Q^* F \rightarrow Q^* F$. Now, observe that the map
\[
\xymatrix{ QFP \ar[r]^{\phi P} & FPP \ar[r]^{F \mu_P} & FP }
\]
equips~$FP$ with a $Q$-algebra structure. By adjointness, the map $\phi^\sharp : Q^* F \rightarrow FP$
is defined as the unique $Q$-algebra morphism such that the following diagram commutes
\[
\xymatrix{
F \ar[r]^-{\eta_{Q^*} F} \ar@/_1pc/[dr]_{F \eta_P} & Q^* F \ar[d]^{\phi^\sharp} \\
 & FP .}
 \]
Note that saying that $\phi^\sharp$ is a $Q$-algebra morphism amounts to saying that the following diagram commutes
\[
\xymatrix{
Q Q^* F \ar[r]^{Q \phi^\sharp} \ar[dd]_{\nu_{Q^*} F} & Q FP \ar[d]^{\phi P} \\
  & F P P \ar[d]^{F \mu_P} \\
Q^* F \ar[r]_{\phi^\sharp} & F P .}
\]
The isomorphism is defined as the identity on 2-cells. It remains to check that what we have defined
is indeed an inverse to the functor defined by composition with $(1_Y, \iota_Q)$, but the verification
is essentially identical to the one given in detail in the proof of~\cite[Theorem~6.1.5]{StatonS:namppc}
and hence we omit it.
\end{proof}

\begin{defn} A 2-category $\catK$ is said to \myemph{admit the construction of free monads} if for every
endomorphism $(Y,Q)$ there exists a monad $(Y,Q^*)$ and a 2-cell $\iota_Q : Q \rightarrow Q^*$ satisfying the
equivalent conditions of Theorem~\ref{thm:frem2cat}.
\end{defn}

\begin{rmk} \label{thm:usefulrmk}
Let us point out that the universal property of the free monad $(Y, Q^*)$
on an endomorphism $(Y,Q)$ stated in item (i) of Theorem~\ref{thm:frem2cat} includes the assertion that
for every monad~$(X,P)$ and every endomorphism map~$(F, \phi) : (X, P) \rightarrow (Y,Q)$,
there exists a unique 2-cell~$\phi^\sharp : Q^* F \rightarrow FP$
such that $(F, \phi^\sharp) : (X, P) \rightarrow (Y,Q^*)$ is a monad map and the diagram
\[
\xymatrix{
QF \ar[r]^{\iota_Q F}  \ar@/_1pc/[dr]_{\phi} & Q^* F \ar[d]^{\phi^\sharp} \\
 & FP }
\]
commutes. From the statement in item (ii) of Theorem~\ref{thm:frem2cat},
it also follows that if $\catK$ is a 2-category that admits the
construction of free monads and has local coproducts,
\myemph{i.e.}~coproducts in its hom-categories,  then for every $F : X \rightarrow Y$,
the initial algebra for the endofunctor
\[
 \begin{array}{ccc}
  \catK(X,Y) & \rightarrow & \catK(X,Y) \\
  (-) & \mapsto & F + Q \, (-)
\end{array}
  \]
has $Q^* F$ as its underlying object and the copair of the
2-cells~$\eta_{Q^*} F : F  \rightarrow Q^* F$ and~$\nu_{Q^*} F : Q Q^* F \rightarrow Q^* F$ as
its structure map.
\end{rmk}

By the bicategorical Yoneda lemma~\cite{StreetR:fibb}, every bicategory is biequivalent to
a 2-category~\cite[Theorem~1.4]{GordonR:coht}. Hence, the remarks and the results above
can be applied also to bicategories. We now introduce our two main classes of examples:
bicategories of spans and bicategories of polynomials.

\begin{examp} \label{thm:span2cat}
Let $\catE$ be a category with finite limits. Recall that a span in $\catE$ is a diagram of the form
\begin{equation}
\label{equ:span}
\xycenter{  & F \ar[dr]^{\tau} \ar[dl]_{\sigma} &  \\
X & &  Y , }
\end{equation}
and that a span morphism is a commutative diagram of the form
\begin{equation}
\label{equ:spanmorphism2cat}
\xycenter{
X  \ar@{=}[d] & F \ar[r]^{\tau} \ar[l]_{\sigma} \ar[d]^{\phi} & Y \ar@{=}[d] \\
X & F' \ar[r]_{\tau'} \ar[l]^{\sigma'} & Y .}
\end{equation}
We write $\Sp_\catE$ for the bicategory of spans in $\catE$, originally defined in~\cite{BenabouJ:intb}, which has the
objects of $\catE$ as 0-cells, spans  as 1-cells  and span morphisms as 2-cells. It is well-known that graphs and
categories in $\catE$ can be identified with endomorphisms and monads in  $\Sp_\catE$~\cite{BenabouJ:intb,BurroniA:tcat}.
For our purposes, it is convenient to recall the definition of the 2-category of linear functors over $\catE$, which
is biequivalent to the bicategory $\Sp_\catE$. Given a span as in~\eqref{equ:span}, we define its associated
linear functor to be the composite
\[
\xymatrix{ \catE/X \ar[r]^{\pbk{\sigma}} & \catE/F \ar[r]^{\ladj{\tau}} & \catE/Y  } ,
\]
where~$\pbk{\sigma}$ acts by pullback along~$\sigma$ and~$\ladj{\tau}$ acts by composition with~$\tau$. In
general, a functor between slices of $\catE$ is said to be linear if it is naturally isomorphic to a functor
of this form. Now, recall from~\cite[\S1.3]{GambinoN:polfpm} that slice categories of $\catE$ are tensored
over~$\catE$ and that linear functors have a canonical strength. The 2-category of linear functors is then defined as
the sub-2-category of~$\Cat$ having slice categories of~$\catE$ as 0-cells, linear functors between them as 1-cells,
and strong natural transformations as 2-cells, \myemph{i.e.} natural transformations compatible with the
canonical strength on linear functors. Let us also recall that a strong natural transformation between linear functors
is cartesian, \myemph{i.e.}~its naturality squares are pullbacks. By the biequivalence,
graphs in~$\catE$ can be thought of as linear endofunctors and categories in $\catE$ can be thought of as linear monads,
\textit{i.e.}~monads whose underlying functor is linear and whose multiplication and unit are strong natural
transformations.
\end{examp}

\begin{examp} \label{thm:poly2cat}
Let~$\catE$ be a locally cartesian closed category. Recall from~\cite[\S1.4]{GambinoN:polfpm} that a
polynomial over $\catE$ is a diagram of the form
\begin{equation}
\label{equ:poly}
\xycenter{
 & \bar{F} \ar[dl]_{\sigma} \ar[r]^{\theta} & F \ar[dr]^{\tau} & \\
 X & & & Y }
 \end{equation}
and a cartesian morphism of polynomials is a diagram  of the form
\[
\xycenter{
X  \ar@{=}[d] & \bar{F} \ar[d] \ar[r] \ar[l] \drpullback  & F \ar@{=}[d]  \ar[r] & Y \ar@{=}[d] \\
X  & \bar{F}'  \ar[r] \ar[l] & F' \ar[r] & Y , }
\]
where the central square is a pullback. We write $\Poly_\catE$ for the bicategory of polynomials over $\catE$, as defined
in~\cite[\S1.16]{GambinoN:polfpm}, which has the objects of $\catE$ as 0-cells, polynomials as 1-cells, and
cartesian morphisms of polynomials as 2-cells. Working in the internal logic of $\catE$, for a polynomial as
in~\eqref{equ:poly} we may represent an element $f \in F$ as an arrow
\[
f : ( x_i \ | \ i \in I   ) \rightarrow y \, ,
\]
where $I \defeq \theta^{-1}(f)$,
the family $( x_i \ | \ i \in I   )$ is defined by letting $x_i \defeq \sigma(i)  $, for~$i~\in~I$, and
$y \defeq  \tau(f)$. Thus, we think of the set $I$ as the arity of the arrow~$f$.
The biequivalence between the bicategory of spans and the 2-category of linear functors
extends to a biequivalence between the bicategory of polynomials and the 2-category of polynomial
functors~\cite[Theorem~2.17]{GambinoN:polfpm}, as we now proceed to recall. For a
polynomial as in~\eqref{equ:poly}, the polynomial functor associated to it is defined as the composite
\[
\xymatrix{
\catE/X \ar[r]^{\pbk{\sigma}} & \catE/ \bar{F} \ar[r]^{\radj{\theta}} & \catE/F \ar[r]^{\ladj{\tau}} &
\catE/Y },
\]
where~$\radj{\theta}$ is the right adjoint to the pullback functor~$\pbk{\theta}$. A functor between
slices of $\catE$ is said to be polynomial if it is naturally isomorphic to a functor of this form.
Like linear functors, polynomial functors have a canonical strength and so we can define the 2-category
of polynomial functors as
the sub-2-category of $\Cat$ having slices of $\catE$ as~0-cells, polynomial functors as~1-cells and cartesian
strong natural transformations as~2-cells. The biequivalence between $\Poly_\catE$ and the 2-category of
polynomial functors allows us to identify endomorphisms and monads in~$\Poly_\catE$ with polynomial
endofunctors and polynomial monads on slices of $\catE$, respectively, where by a polynomial monad we mean a monad whose
underlying endofunctor is polynomial and whose multiplication and unit are cartesian strong natural transformations.

Let us also recall from~\cite{MoerdijkI:weltc} that a locally cartesian closed category $\catE$ is said to have W-types
if every polynomial endofunctor $P : \catE \rightarrow \catE$ has an initial algebra, called the W-type of the functor.
Note that a polynomial functor $P : \catE \rightarrow \catE$ has to be represented by a diagram as in~\eqref{equ:poly}
in which both $X$ and $Y$ are the terminal object of $\catE$ and hence is competely determined by the map $\theta$.
The category-theoretic notion of a W-type is a counterpart of the notion of a type of wellfounded trees, originally
introduced by Martin-L\"of within his dependent type theory~\cite{NordstronB:promlt}. As shown
in~\cite[Theorem~12]{GambinoN:weltpf}, if $\catE$ has disjoint coproducts, the assumption of W-types is sufficient
to show that, for all $X \in \catE$,  every polynomial endofunctor $P : \catE/X \rightarrow \catE/X$ has an initial
algebra. For further material and references on polynomial functors, see~\cite{GambinoN:polfpm} and its bibliography.
\end{examp}

Proposition~\ref{thm:key2cat} provides the horizontal part of Proposition~\ref{thm:freemndexamp}. Item~(i)
in its statement refers to the notion of a pretopos with parametrized list objects, for which we invite the reader
to refer to~\cite{MaiettiM:joyaul}.

\begin{prop} \label{thm:key2cat} \, \hfill
\begin{enumerate}[(i)]
\item If $\catE$ is a pretopos with parametrized list objects,
the bicategory  $\Sp_\catE$ admits the construction of free monads.
\item If $\catE$ is a  locally cartesian closed category  with disjoint coproducts and
W-types, the bicategory $\Poly_\catE$ admits the construction of free monads.
\end{enumerate}
\end{prop}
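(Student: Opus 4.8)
The plan is to invoke Theorem~\ref{thm:frem2cat}: in each case it suffices to produce, for an arbitrary endomorphism $(Y,Q)$, a monad $(Y,Q^*)$ together with a 2-cell $\iota_Q \co Q \to Q^*$ satisfying condition~(ii). Since the notions involved are invariant under biequivalence, as observed after Remark~\ref{thm:usefulrmk}, I would carry out the argument in the 2-categories of linear functors, resp.\ of polynomial functors, over $\catE$ set up in Examples~\ref{thm:span2cat} and~\ref{thm:poly2cat}, where the relevant free constructions are most transparent. There, an endomorphism $(Y,Q)$ is a linear (resp.\ polynomial) endofunctor $Q$ on the slice $\catE/Y$, and one must exhibit a linear (resp.\ polynomial) monad $Q^*$ and a 2-cell $\iota_Q \co Q \to Q^*$ such that, for every $X$, the functor $F \mapsto (Q^*F,\nu_{Q^*}F)$ is left adjoint to $U_X \co \Qalg_X \to \catK(X,Y)$. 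I would first record that both 2-categories have local coproducts: a finite coproduct of linear (resp.\ polynomial) functors is represented by forming coproducts in $\catE$ of the objects occurring in the representing spans (resp.\ polynomials), which is well-behaved because a pretopos, resp.\ a locally cartesian closed category with disjoint coproducts, has disjoint finite coproducts. Recalling then that an initial algebra for the endofunctor $(-) \mapsto F + Q\,(-)$ on $\catK(X,Y)$ is precisely a free $Q$-algebra on $F$, verifying condition~(ii) reduces to: constructing these initial algebras uniformly in $F$, identifying their underlying objects with $Q^*F$ for a fixed linear (resp.\ polynomial) functor $Q^*$, and equipping $Q^*$ with the monad structure under which $\nu_{Q^*}$ is the free-algebra structure map.

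For part~(i), the monad $Q^*$ is the \emph{free category} on the internal graph represented by $Q$: its object of arrows is the object of finite composable strings of edges, the multiplication is concatenation, the unit picks out empty strings, and $\iota_Q$ includes the length-one strings. The construction of free categories carried out in~\cite{MaiettiM:joyaul} shows precisely that, in a pretopos with parametrized list objects, this string object exists in $\catE$, being obtained as a parametrized list object, and has the universal property identifying $Q^*F$, for an arbitrary linear $F \co X \to Y$, with the underlying object of the initial algebra of $(-) \mapsto F + Q\,(-)$ on $\catK(X,Y)$, uniformly in $F$. It then remains to check that $Q^*$ is itself linear and that concatenation and the choice of empty strings are strong natural transformations — automatically cartesian, since strong natural transformations between linear functors are cartesian (Example~\ref{thm:span2cat}) — so that $(Y,Q^*)$ is a linear monad; condition~(ii) of Theorem~\ref{thm:frem2cat} then holds.

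For part~(ii), the monad $Q^*$ is the \emph{free monad} on the polynomial endofunctor $Q$, namely the polynomial of well-founded $Q$-trees, with multiplication given by grafting, unit given by the trivial tree, and $\iota_Q$ the inclusion of corollas; this is the construction of~\cite[\S4.6]{GambinoN:polfpm}. The hypothesis of W-types ensures that the object of $Q$-trees exists in $\catE$, whence $Q^*$ is a polynomial functor; together with disjoint coproducts, through the indexed form of W-types established in~\cite[Theorem~12]{GambinoN:weltpf}, it ensures that for every $X$ and every polynomial $F \co X \to Y$ the endofunctor $(-) \mapsto F + Q\,(-)$ on $\catK(X,Y)$ has an initial algebra with underlying object $Q^*F$. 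One then checks, as in~\cite{GambinoN:polfpm}, that grafting and the trivial tree are cartesian strong natural transformations, so that $(Y,Q^*)$ is a polynomial monad and condition~(ii) holds.

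The step I expect to be the main obstacle, in both cases, is not the existence of the underlying free objects — this is exactly what parametrized list objects, resp.\ W-types, are designed to provide — but the verification that the construction stays inside the relevant sub-2-category: that $Q^*$ is again linear (resp.\ polynomial), that its multiplication and unit are strong (resp.\ cartesian strong) natural transformations, and, most delicately, that the universal property holds for the functor $\catK(X,Y) \to \Qalg_X$ for \emph{every} object $X$ and not merely for $X = Y$ or $X = 1$. It is precisely this parametrized requirement that forces the use of \emph{parametrized} list objects in~(i) and, in~(ii), the passage through the indexed W-types of~\cite{GambinoN:weltpf}.
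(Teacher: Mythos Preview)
Your proposal is correct and follows essentially the same route as the paper: pass to the biequivalent 2-category of linear (resp.\ polynomial) functors and verify condition~(ii) of Theorem~\ref{thm:frem2cat}, appealing to \cite{MaiettiM:joyaul} and to \cite{GambinoN:weltpf,GambinoN:polfpm} for the existence of the free construction and for the fact that it remains linear (resp.\ polynomial with cartesian strong structure maps).

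The one difference worth recording concerns precisely the point you flag as the main obstacle, the parametrization over arbitrary $X$. Rather than constructing initial algebras for $(-)\mapsto F + Q\,(-)$ on each hom-category $\catK(X,Y)$, the paper first obtains the \emph{ordinary} free-$Q$-algebra adjunction $\Qalg \rightleftarrows \catE/Y$, takes $Q^*$ to be its induced monad (polynomial by \cite[Theorem~4.5]{GambinoN:polfpm}), and verifies directly that the left adjoint sends $A$ to $(Q^*A,\nu_A)$. The extension to general $X$ is then dispatched in one line by observing that $\Qalg_X$ unpacks, by definition, to the category of polynomial (resp.\ linear) functors $F \co \catE/X \to \catE/Y$ equipped with a cartesian strong (resp.\ strong) transformation $QF \to F$, so that postcomposition with $Q^*$ supplies the left adjoint to $U_X$ for every $X$ simultaneously. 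This is slightly more economical than building indexed initial algebras separately, though of course the two packagings are equivalent; in particular the paper never needs to invoke local coproducts or Remark~\ref{thm:usefulrmk} in the proof itself.
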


\begin{proof} We begin by proving (ii). We exploit the biequivalence between the bicategory of polynomials
and the
2-category of polynomial functors. Let $Q : \catE/Y \rightarrow \catE/Y$ be a polynomial endofunctor.
We show that there is a polynomial monad $Q^* : \catE/Y \rightarrow \catE/Y$ and a cartesian strong
natural transformation $\iota  : Q \rightarrow Q^*$ that satisfy the universal property in item (ii)
of Theorem~\ref{thm:frem2cat}. By~\cite[Theorem~12]{GambinoN:weltpf}, the assumption that $\catE$ has W-types
implies that the forgetful
functor $U : \Qalg \rightarrow \catE/Y$ has a left adjoint. We let~$Q^* : \catE/Y \rightarrow \catE/Y$ be the
monad resulting from the adjunction. The monad $Q^* : \catE/Y \rightarrow \catE/Y$ is polynomial
by~\cite[Theorem~4.5]{GambinoN:polfpm}. If~$Q : \catE/Y \rightarrow \catE/Y$ is represented by the polynomial
\begin{equation}
\label{equ:endoq}
\xycenter{
  & \bar{Q} \ar[dl]  \ar[r]^{\theta_Q}  &  Q  \ar[dr]  & \\
Y &  & & Y }
\end{equation}
then $Q^* : \catE/Y \rightarrow \catE/Y$ is represented by the polynomial
\begin{equation}
\label{equ:mndq}
\xycenter{
  & \bar{Q}^* \ar[dl]  \ar[r]^{\theta_{Q^*}} &  Q^*  \ar[dr]   & \\
Y & & & Y ,}
\end{equation}
where the object~$Q^*$ in~\eqref{equ:mndq} is described in the internal logic of~$\catE$ as the set
of wellfounded trees of profile $Q$, \myemph{i.e.}~trees built up from identities and formal composites
of the arrows in $Q$. The map $\theta_{Q^*}$ in~\eqref{equ:mndq} describes the arities of the arrows
in $Q^*$ in the evident way. The inclusion of the arrows in $Q$ into
those in $Q^*$ is part of a diagram
\begin{equation}
\label{equ:iotacart}
\xycenter{
Y \ar@{=}[d] &  \bar{Q} \ar[r]^{\theta_Q}  \ar[l]  \ar[d] \drpullback & Q  \ar[d] \ar[r]  & Y \ar@{=}[d] \\
Y  & \bar{Q}^* \ar[r]_{\theta_{Q^*}}  \ar[l]  & Q^* \ar[r]   & Y ,}
\end{equation}
which represents the required cartesian strong natural transformation~$\iota : Q \rightarrow Q^*$.
A direct verification shows that the left adjoint to~$U : \Qalg \rightarrow \catE/Y$ maps an object~$A$
to the~$Q$-algebra with underlying object~$Q^*A$ and structure map~$\nu_A : QQ^*A \rightarrow Q^*A$, where
$\nu_{Q^*} : QQ^* \rightarrow Q^*$ is defined as in~\eqref{equ:nu}.
To conclude the proof of item (ii) it is sufficient to observe that, for $X \in \catE$, the category $\Qalg_X$
is equivalent to the category of polynomial functors $F : \catE/X \rightarrow \catE/Y$ equipped with
a  cartesian strong natural transformation $\phi : QF \rightarrow F$.

\smallskip

The proof of item~(i) is similar, except that polynomial functors are replaced by linear
functors. In this case, the assumption of W-types can be replaced by that of parametrized
list objects, which suffice to prove the existence of the left adjoint to the forgetful
functor~$U : \Qalg \rightarrow \catE/Y$ and that the resulting monad $Q^* : \catE/Y \rightarrow \catE/Y$
is linear. This is because linear endofunctors (respectively, linear monads) are just graphs (respectively, categories)
internal to $\catE$, and, as shown in~\cite[Proposition 7.3]{MaiettiM:joyaul}, the assumption of
parametrized list objects guarantees the existence of the free category on a graph in~$\catE$.
\end{proof}

If~$\catE$ is a locally cartesian closed pretopos with W-types, then it has list objects and these
are parametrized since we are in a cartesian closed category. Hence, such a category satisfies the hypotheses
of both item~(i) and item~(ii) of Proposition~\ref{thm:key2cat}. In this case, the construction of the free monad
for polynomial endofunctors generalizes the construction of the free monad for linear endofunctors.

 \section{Monads in a double category}
\label{sec:mondc}

\subsection*{Notation and preliminaries.}

We assume  readers to be familiar with the basic concepts of the
theory of double categories (see~\cite{EhresmannC:cats} for the
original reference and~\cite{FioreT:modscs,GrandisM:limdc,GrandisM:adjdc}  for
modern accounts) and limit ourselves to introducing some notation
and recalling some basic notions. For a double
category~$\bbC$, we write~$\Obj_\bbC$ for its class of objects,
$\Hor_\bbC$ for its class of horizontal arrows, $\Ver_\bbC$ for its
class of vertical arrows and~$\Sq_\bbC$ for its class of squares.
We write~$\bbC_0$ for the category of objects and vertical arrows
and~$\bbC_1$ for the category of horizontal arrows and squares.
We allow horizontal composition to be
associative and unital up to coherent invertible squares rather
than strictly. For the sake of readability, however, we shall
work as if horizontal composition were strict, as allowed
by~\cite[Theorem~7.5]{GrandisM:limdc}. Typically,  a square will be written
as follows:
\begin{equation}
\label{equ:typicalsquare}
\xycenter{ X  \ar[r]^F \ar[d]_{u}  \ar@{}[dr]|{\alpha} & Y \ar[d]^{v} \\
X' \ar[r]_{F'} &  Y'. }
\end{equation}
Identity squares will be written without a label, as follows:
\[
\xymatrix{
X \ar[r]^F \ar@{=}[d]  & Y \ar@{=}[d]  \\
X \ar[r]_F & Y,} \qquad
\xymatrix{
X \ar@{=}[r] \ar[d]_u & X \ar[d]^u \\
X' \ar@{=}[r] & X'. }
\]
For a double category $\bbC$, its horizontal 2-category $\mathcal{H}_\bbC$ is defined as follows: the 0-cells
are the objects of $\bbC$, the 1-cells are the horizontal arrows of $\bbC$ and the 2-cells are the squares of
the form
\[
\xymatrix{
X \ar[r]^{F} \ar@{=}[d] \ar@{}[dr]|{\alpha} & Y \ar@{=}[d] \\
X \ar[r]_{F'} & Y .}
\]
The notions of horizontal adjunction and vertical adjunction between double categories can be defined using
the general notion of an adjunction in a 2-category~\cite{KellyGM:reve2c}. A horizontal adjunction
is an adjunction in the 2-category of double categories, double functors and horizontal natural
transformations; vertical adjunctions are defined analogously, replacing horizontal natural transformations
with vertical ones~\cite{GrandisM:limdc}.

\begin{examp} \label{thm:span} Let $\catE$ be a category with finite limits. With a minor abuse
of notation, we write $\Sp_\catE$ also for the double category of spans in $\catE$, which has
objects of $\catE$ as objects, spans as horizontal arrows, maps of $\catE$ as vertical arrows
and diagrams of the form
\[
\xymatrix{
X \ar[d]_u & F \ar[r]^{\tau} \ar[d]^{\phi} \ar[l]_{\sigma} & Y \ar[d]^v \\
X' & F' \ar[l]^{\sigma'} \ar[r]_{\tau'} & Y' }
\]
as squares. Note that the horizontal bicategory of this double category is exactly the bicategory
of spans in $\catE$ defined in Example~\ref{thm:span2cat}.
\end{examp}

\begin{examp} Let $\catE$ be a locally cartesian closed category. With another
abuse of notation, we write $\Poly_\catE$ also for the double category of polynomials over $\catE$,
which has the objects of $\catE$ as objects, polynomials as horizontal arrows, maps of $\catE$ as
vertical arrows and diagrams of the form
\[
\xymatrix{
X \ar[d]_u & \bar{F} \ar[l]_{\sigma} \drpullback \ar[r]^{\theta} \ar[d] & F \ar[r]^{\tau} \ar[d]^{\phi}  & Y \ar[d]^v \\
X' &  \bar{F'}\ar[l]^{\sigma'} \ar[r]_{\theta'} & F'  \ar[r]_{\tau'} & Y' ,}
\]
where the central square is a pullback, as squares. The bicategory of polynomials defined in Example~\ref{thm:poly2cat}
is the horizontal bicategory of this double category.
 \label{thm:poly}
\end{examp}

\subsection*{The double categories of endomorphisms and monads.} Below, we define the double category $\Mnd(\bbC)$ of monads in a double
 category~$\bbC$. After giving the definition, we  explain how it generalizes the definition of the 2-category $\Mnd(\catK)$
of monads in a 2-category $\catK$.
In view of our applications, we begin by introducing the double category $\End(\bbC)$ of
endomorphisms in a double category~$\bbC$.

\begin{defn} \label{thm:end} Let $\bbC$ be a double category.
\begin{enumerate}[(i)]
\item A \myemph{horizontal  endomorphism} is a pair  $(X,P)$ consisting of an object~$X$
and a horizontal arrow $P : X \rightarrow X$. Since we consider only horizontal endomorphisms,
we refer to them simply as endomorphisms.
\item A \myemph{horizontal endomorphism map} $(F, \phi) : (X,P) \rightarrow (Y,Q)$
 consists of a horizontal arrow $F \colon X \rightarrow Y$ and a square
\[
\xymatrix{
X \ar[r]^{F} \ar@{=}[d]   \ar@{}[drr]|{\phi}   & Y \ar[r]^{Q} & Y \ar@{=}[d]  \\
X \ar[r]_{P} & X \ar[r]_{F} & Y .}
\]
\item A \myemph{vertical endomorphism map} $(u, \bar{u}) : (X,P) \rightarrow (X',P')$
consists of a vertical arrow $u : X \rightarrow X'$ and a square
\[
\xymatrix{
X \ar[r]^{P} \ar[d]_{u} \ar@{}[dr]|{\bar{u}}   & X \ar[d]^{u} \\
X'  \ar[r]_{P'} & X' .}
\]
\item An \myemph{endomorphism square}
\[
 \xymatrix@C=8ex {
 (X,P) \ar[r]^{(F, \phi)}   \ar[d]_{(u, \bar{u})}   \ar@{}[dr]|{\alpha} & (Y,Q) \ar[d]^{(v, \bar{v})} \\
 (X',P')  \ar[r]_{(F', \phi')} & (Y',Q') }
\]
is a square
 \[
 \xymatrix {
 X  \ar[r]^{F}   \ar[d]_{u}   \ar@{}[dr]|{\alpha} & Y \ar[d]^{v}  \\
 X'  \ar[r]_{F'} & Y' }
\]
satisfying the condition
\[
\xycenter{
X  \ar[r] \ar@{=}[d] \ar@{}[drr]|{\phi}  & Y  \ar[r] & Y \ar@{=}[d] \\
X  \ar[r] \ar[d]  \ar@{}[dr]|{\bar{u}}  & X  \ar[r] \ar[d]  \ar@{}[dr]|{\alpha}  & Y \ar[d]  \\
X' \ar[r]  & X' \ar[r]  & Y'} \quad = \quad \xycenter{
X \ar[r] \ar[d]  \ar@{}[dr]|{\alpha} &  Y   \ar[r]  \ar[d]  \ar@{}[dr]|{\bar{v}} & Y \ar[d]  \\
X'  \ar[r]   \ar@{=}[d]  \ar@{}[drr]|{\phi'}  & Y'  \ar[r]  & Y' \ar@{=}[d] \\
X'  \ar[r]  & X'  \ar[r]   & Y' .}
\]
\end{enumerate}
\end{defn}

We write $\End(\bbC)$ for the double category of  endomorphisms,
horizontal endomorphism maps, vertical endomorphism maps and
endomorphism squares. We omit the straightforward verification
that~$\End(\bbC)$ is indeed a double category.

\begin{defn} \label{thm:mnd} Let $\bbC$ be a double category.
\begin{enumerate}[(i)]
\item A \myemph{monad} is an endomorphism $(X,P)$ equipped with squares
\[
\xymatrix{
X \ar[r]^{P} \ar@{=}[d]  \ar@{}[drr]|{\mu_P} & X \ar[r]^{P} & X \ar@{=}[d] \\
X \ar[rr]_{P} & & X } \qquad \xymatrix{
X \ar@{=}[r]   \ar@{=}[d]  \ar@{}[dr]|{\eta_P}  & X \ar@{=}[d] \\
X \ar[r]_{P} & X }
\]
satisfying the associativity law
\[
\xycenter{
X \ar[r]  \ar@{=}[d]  \ar@{}[drr]|{\mu_P} & X \ar[r]  & X \ar[r]  \ar@{=}[d]  & X \ar@{=}[d] \\
X \ar[rr]  \ar@{=}[d]  \ar@{}[drrr]|{\mu_P}  & & X \ar[r] & X  \ar@{=}[d]  \\
X \ar[rrr]  & & & X }  \quad =  \quad
\xycenter{
X \ar[r] \ar@{=}[d]    & X \ar[r] \ar@{=}[d]  \ar@{}[drr]|{\mu_P} & X \ar[r] & X \ar@{=}[d] \\
X \ar[r] \ar@{=}[d] \ar@{}[drrr]|{\mu_P} & X \ar[rr] & & X \ar@{=}[d] \\
X \ar[rrr] & & & X }
\]
and the unit laws
\[
\xycenter{
X \ar[r] \ar@{=}[d]  & X \ar@{=}[d] \ar@{=}[r]  \ar@{}[dr]|{\eta_P} & X \ar@{=}[d] \\
X \ar[r] \ar@{=}[d] \ar@{}[drr]|{\mu_P}  & X \ar[r] & X \ar@{=}[d] \\
X \ar[rr] & & X } \quad = \quad
\xycenter{
X \ar[r] \ar@{=}[d]   & X \ar@{=}[d] \\
X \ar[r] & X }  \quad = \quad
\xycenter{
X \ar@{=}[r]   \ar@{=}[d] \ar@{}[dr]|{\eta_P}  & X \ar@{=}[d] \ar[r]   & X \ar@{=}[d] \\
X \ar[r] \ar@{=}[d]  \ar@{}[drr]|{\mu_P} & X \ar[r] & X \ar@{=}[d] \\
X \ar[rr] & & X. }
\]
As before, we refer to a monad as above by mentioning only its underlying endomorphism~$(X,P)$.
\item A \myemph{horizontal monad map} $(F, \phi) : (X,P) \rightarrow (Y,Q)$
is a horizontal endomorphism map between the underlying endomorphisms
 satisfying the following conditions:
\[
\xycenter{
X \ar@{=}[d] \ar[r]  & Y \ar@{=}[d] \ar[r]  \ar@{}[drr]|{\mu_Q} & Y \ar[r]  & Y \ar@{=}[d] \\
X \ar@{=}[d] \ar[r]  \ar@{}[drrr]|{\phi } & Y \ar[rr] &      & Y \ar@{=}[d] \\
X \ar[rr]  & & X \ar[r]  & Y }  \quad = \quad \xycenter{
X  \ar@{=}[d]  \ar[r] \ar@{}[drr]|{\phi } & Y  \ar[r] & Y  \ar[r] \ar@{=}[d]   & Y \ar@{=}[d]  \\
X  \ar@{=}[d] \ar[r]   & X  \ar[r] \ar@{=}[d]  \ar@{}[drr]|{\phi } & Y  \ar[r] & Y  \ar@{=}[d] \\
X  \ar@{=}[d]  \ar[r]  \ar@{}[drr]|{\mu_P}  & X  \ar[r] & X  \ar[r] \ar@{=}[d]  & Y \ar@{=}[d]  \\
X   \ar[rr] & & X  \ar[r] & Y   }
\]

\[
\xycenter{
X \ar@{=}[d]  \ar[r]   & Y \ar@{=}[r] \ar@{=}[d]   \ar@{}[dr]|{\eta_Q} & Y \ar@{=}[d]  \\
X \ar@{=}[d] \ar[r] \ar@{}[drr]|{\phi }  & Y \ar[r]  & Y \ar@{=}[d]  \\
X \ar[r]  & X \ar[r]  & Y } \quad = \quad \xycenter{
X \ar@{=}[d] \ar@{=}[r]   \ar@{}[dr]|{\eta_P}   & X \ar[r]  \ar@{=}[d]   & Y \ar@{=}[d]  \\
X \ar[r]  & X \ar[r]  & Y.}
\]
\item A \myemph{vertical monad map} $(u, \bar{u}) : (X,P) \rightarrow (X',P')$
is a vertical endomorphism map between the underlying endomorphisms
satisfying the following conditions:
\[
\xycenter{
X \ar[r] \ar@{=}[d] \ar@{}[drr]|{\mu_P}  & X \ar[r] & X \ar@{=}[d] \\
X \ar[rr]  \ar[d]  \ar@{}[drr]|{\bar{u} } & & X \ar[d] \\
X' \ar[rr]  & & X' } \quad = \quad
\xycenter{
X \ar[r] \ar[d] \ar@{}[dr]|{\bar{u}} & X \ar[r]  \ar[d]  \ar@{}[dr]|{\bar{u}}  & X \ar[d]  \\
X' \ar[r]   \ar@{=}[d]  \ar@{}[drr]|{\mu_{P'}} & X' \ar[r]   & X' \ar@{=}[d] \\
X' \ar[rr]  & & X' }
\]

\[
\xycenter{
X \ar@{=}[r]  \ar@{=}[d]   \ar@{}[dr]|{\eta_P}  & X \ar@{=}[d] \\
X \ar[r] \ar[d] \ar@{}[dr]|{\bar{u}}  & X \ar[d] \\
X' \ar[r] & X'}  \quad = \quad
\xycenter{
X \ar@{=}[r]  \ar[d]   & X \ar[d]  \\
X' \ar@{=}[r]  \ar@{=}[d] \ar@{}[dr]|{\eta_{P'}} & X' \ar@{=}[d] \\
X' \ar[r]  & X'. }
\]
\item A \myemph{monad square} is an endomorphism square between the underlying
endomorphism maps.
\end{enumerate}
\end{defn}

We write~$\Mnd(\bbC)$ for the double category
of monads, horizontal monad maps, vertical monad maps and
monad squares; again, it is straightforward
to check that~$\Mnd(\bbC)$ is a double category. Before giving examples, we clarify the relationship between
our definitions and those in~\cite{StreetR:fortm}.

\begin{rmk} \label{thm:comparison}  Let~$\catK$ be a 2-category and
consider the double category~$\mathbb{H}(\catK)$, that has~$\catK$
as its horizontal 2-category and only identity 1-cells as vertical
arrows. Monads in~$\catK$ are the same as monads
in~$\mathbb{H}(\catK)$ and monad maps in~$\catK$ are the same
 as horizontal monad maps in~$\mathbb{H}(\catK)$. Finally,
monad 2-cells in~$\catK$ are the same as monad squares
in~$\mathbb{H}(\catK)$ of the special form
\[
\xymatrix@C=8ex{
(X,P) \ar[r]^{(F,\phi)}\ar@{=}[d] \ar@{}[dr]|{\alpha} & (Y,Q) \ar@{=}[d]  \\
(X,P) \ar[r]_{(F',\phi')} & (Y,Q). }
\]
In particular, the horizontal 2-category of~$\Mnd(
\mathbb{H}(\catK))$ is the 2-category~$\Mnd(\catK)$
of~\cite{StreetR:fortm}. As we explain in the following examples,
the presence of non-trivial vertical arrows in a double category
allows us to describe important mathematical structures as vertical
monad maps.
\end{rmk}

\begin{examp}
Let~$\catE$ be a category with finite limits. The category~$\Grph_\catE$ of graphs and graph morphisms internal
to~$\catE$ can be identified with the category of endomorphisms and vertical endomorphism maps
in the double category $\Sp_\catE$, while the category~$\Cat_\catE$ of categories and functors internal to~$\catE$
can be identified with the category of monads and vertical monad maps in~$\Sp_\catE$. We see
here an example of the benefits of considering monads in a double category rather than in a 2-category: while categories
can be seen as monads in the bicategory of spans in~$\catE$, functors between
categories are not the same as monad maps in that
bicategory.
\end{examp}

\begin{examp} Let $\catE$ be a locally cartesian closed category with finite disjoint coproducts and W-types.
We write $\PolyEnd_\catE$ for the category  of endomorphisms and vertical endomorphism maps
in the double category~$\Poly_\catE$ and write $\PolyMnd_\catE$ for the category of monads and vertical monad maps in $\Poly_\catE$.
If $M: \catE\to \catE$ is the free monoid monad in $\catE$
(which exists by the assumptions on $\catE$), then
  there is a double category $\PolyEnd_{\catE}/M$ whose objects are
  endomorphisms with a vertical endomorphism map to~$M$.  This is the
  double category of~$M$-spans in the sense of~\cite{BurroniA:tcat}
  and~\cite{LeinsterT:higohc}, while~$\PolyMnd_{\catE}$ is the double category of multicategories.
  The free monad on an endofunctor over~$M$ is the free multicategory
  on an $M$-span.  Furthermore, the vertical maps in
  $\PolyMnd_{\catE}$ are the multifunctors, and hence we see again
  the benefits of considering monads in the double categories rather
  than just in $2$-categories.  Further variations are possible: with
  a polynomial monad $T$ in the place of $M$ we get the same result
  for $T$-spans and $T$-multicategories, and in the particular case
  where $T$ is the identity monad, we are back to just plain
  categories in $\catE$.
\end{examp}

The function sending a monad $(X,P)$ to its underlying object $X$ extends
to a double functor $\Und : \Mnd(\bbC) \rightarrow \bbC$ and the function mapping an object $X \in \bbC$ to
the identity monad $(X, 1_X)$ extends to a double functor $\Inc : \bbC
\rightarrow \Mnd(\bbC)$. It is easy to check that $\Inc$ is a horizontal right adjoint
to~$\Und$, essentially as in the 2-categorical formal theory of monads~\cite[Theorem~1]{StreetR:fortm}.
The question of when~$\bbC$ admits the construction of Eilenberg-Moore objects, that is, of
when the double functor $\Inc$ has a horizontal right adjoint, will be treated in a sequel to this
paper. Here, instead, we focus on the construction of free monads.

\subsection*{Free monads in a double category.} We write $U : \Mnd(\bbC)
\rightarrow \End(\bbC)$ for the forgetful double functor mapping a monad to its underlying endomorphism.

\begin{defn} \label{thm:deffrem}
A double category $\bbC$ is said to \myemph{admit
the construction of free monads} if
$U : \Mnd(\bbC) \rightarrow \End(\bbC) $ has a vertical left adjoint.
\end{defn}

\begin{rmk} \label{thm:charadj} We now make explicit what it means
for a double category $\bbC$ to admit the construction of free monads.
By an analogue of  the characterization of ordinary adjunctions in
terms of universal arrows~\cite[Theorem IV.2]{MacLaneS:catwm}, to
give a vertical left adjoint to $U$ amounts to giving the following
data in~\eqref{item:adj1}-\eqref{item:adj4} satisfying the functoriality
condition in~$(\ast)$.
\begin{enumerate}[(i)]
\item \label{item:adj1} For every endomorphism $(X,P)$, a monad $(X^*,P^*)$.
\item \label{item:adj2} For every endomorphism $(X,P)$, a universal vertical
endomorphism map
\[
(\iota_X, \iota_P) : (X,P) \rightarrow (X^*, P^*) \, .
\]
Universality means that for
each vertical endomorphism map $(u, \bar{u}) \colon (X,P) \rightarrow (X',P')$, where $(X',P')$ is a monad, there exists a unique vertical
monad map $(u^\sharp, \bar{u}^\sharp) : (X^*,P^*) \rightarrow
(X',P')$ such that
\[
\vcenter{\hbox{ \xymatrix{
X \ar[r]^P \ar[d]_{u} \ar@{}[dr]|{\bar{u}} & X \ar[d]^{u} \\
X' \ar[r]_{P'} & X' }}} \quad =  \quad \vcenter{\hbox{ \xymatrix{
X \ar[r]^P  \ar[d]_{\iota_X} \ar@{}[dr]|{\iota_P} & X \ar[d]^{\iota_X} \\
X^* \ar[r]  \ar[d]_{u^\sharp} \ar@{}[dr]|{\bar{u}^\sharp} & X^* \ar[d]^{u^\sharp}  \\
X' \ar[r]_{P'} & X' .}}}
\]
\item \label{item:adj3} For every horizontal endomorphism map
$(F,\phi) \colon (X,P) \rightarrow (Y,Q)$, a horizontal
monad map $(F^*, \phi^*) \colon (X^*,P^*) \rightarrow (Y^*,Q^*)$.
\item \label{item:adj4} For every horizontal endomorphism map  $(F, \phi) : (X,P) \rightarrow (Y,Q)$,
a universal endomorphism square
\[
\xymatrix @C=8ex{
(X,P) \ar[r]^{(F,\phi)} \ar[d]_{(\iota_X, \iota_P)}  \ar@{}[dr]|{\iota_{(F,\phi)}} & (Y,Q) \ar[d]^{(\iota_Y, \iota_Q)} \\
(X^*,P^*) \ar[r]_{(F^*,\phi^*)} & (Y^*,Q^*) . }
\]
Universality means that for every endomorphism square
\begin{equation}
\label{equ:myalpha}
\vcenter{\hbox{
\xymatrix @C =8ex
{ (X,P) \ar[r]^-{(F,\phi)} \ar[d]_-{(u,\bar{u})} \ar@{}[dr]|-{\alpha} & (Y,Q) \ar[d]^-{(v, \bar{v})} \\
(X',P') \ar[r]_-{(F', \phi')} & (Y',Q'), } }}
\end{equation}
where $(X',P')$, $(Y',Q')$ are monads and $(F, \phi') : (X',P')
\rightarrow (Y',Q')$ is a horizontal monad map,  there exists a
unique monad square
\[
\xymatrix @C=8ex {(X^*, P^*)  \ar[r]^-{(F^*, \phi^*)} \ar[d]_-{(u^\sharp,\bar{u}^\sharp)} \ar@{}[dr]|{\alpha^\sharp}  & (Y^*, Q^*)
\ar[d]^-{(v^\sharp, \bar{v}^\sharp)} \\
(X',P') \ar[r]_-{(F', \phi')} & (Y',Q') }
\]
 such that
\[
\vcenter{\hbox{
\xymatrix@C=8ex  {
(X,P) \ar[r]^{(F,\phi)} \ar[d]_-{(u,\bar{u}) }   \ar@{}[dr]|-{\alpha} & (Y,Q) \ar[d]^-{(v,\bar{v})} \\
(X',P')  \ar[r]_{(F',\phi')} & (Y',Q') }  }}  \quad = \quad
\vcenter{\hbox{
 \xymatrix@C=8ex @R = 1cm {
(X,P) \ar[r]^{(F,\phi)} \ar[d]_{(\iota_X,\iota_P)} \ar@{}[dr]|{\iota_{(F,\phi)}}  & (Y,Q) \ar[d]^{(\iota_Y,\iota_Q)}   \\
(X^*,P^*) \ar[r] \ar[d]_{(u^\sharp, \bar{u}^\sharp)} \ar@{}[dr]|{\alpha^\sharp} & (Y^*, Q^*)
\ar[d]^{(v^\sharp, \bar{v}^\sharp)} \\
(X', P')  \ar[r]_{(F', \phi') } & (Y',Q').  }}} \medskip
\]
\item[$(\ast)$] The assignments in~\eqref{item:adj1} and~\eqref{item:adj3} give a functor
\[
    (-)^* : \big( \Obj_ {\End(\bbC)} \,  , \Hor_{\End(\bbC)} \big)  \rightarrow
              \big( \Obj_{\Mnd(\bbC)}  \, , \Hor_{\Mnd(\bbC)} \big)
\]
and the assignments in~\eqref{item:adj2} and~\eqref{item:adj4} give a functor
\[
\iota : \big( \Obj_ {\End(\bbC)} \,  , \Hor_{\End(\bbC)} \big)  \rightarrow
              \big( \Ver_{\End(\bbC)}  \, , \Sq_{\End(\bbC)} \big) \, .
\]
\end{enumerate}
Note that the data and the universality in~\eqref{item:adj2} actually follow from the data and the universality
in~\eqref{item:adj4} by taking $(F,\phi)$ to be the horizontal identity on an endomorphism~$(X,P)$.
\end{rmk}

A necessary condition for $U : \Mnd(\bbC) \rightarrow \End(\bbC)$ to have a vertical left adjoint is
that its vertical part
\begin{equation}
\label{equ:verticaladj}
U_0 : \Mnd(\bbC)_0 \rightarrow \End(\bbC)_0
\end{equation}
has a left adjoint. Indeed,  this is precisely what items~\eqref{item:adj1} and~\eqref{item:adj2}
of Remark~\ref{thm:charadj} amount to. Here, $\End(\bbC)_0$ denotes the category of endomorphisms and vertical endomorphism maps and $\Mnd(\bbC)_0$
denotes the category of monads and vertical monad maps.

\begin{examp} Let $\catE$ be a category with finite limits. The functor in~\eqref{equ:verticaladj}
for the double category~$\Sp_\catE$ is the forgetful functor $U_0 : \Cat_\catE  \rightarrow  \Grph_\catE$
mapping a category in $\catE$ to its underlying graph.
\end{examp}

\begin{examp} Let $\catE$ be a locally cartesian closed category.
The functor in~\eqref{equ:verticaladj} for the double category $\Poly_\catE$  is the forgetful
functor~$U_0 : \PolyMnd_\catE \rightarrow \PolyEnd_\catE$ mapping a polynomial monad to its
underlying endofunctor.
\end{examp}

\section{Monads in a framed bicategory}
\label{sec:monfb}

We now proceed to establish some properties of the double category~$\Mnd(\bbC)$ under the assumption that~$\bbC$ is
a framed bicategory, leading to our main theorem (Theorem~\ref{thm:freemndfrmbicat} below), which provides conditions
for~$\bbC$ to admit the construction of free monads. We begin by recalling from~\cite{ShulmanM:frabmf} the definition
of a framed bicategory and some useful facts.

\subsection*{Framed bicategories.} For a double category $\bbC$, the functor
\[
(\partial_0, \partial_1)  : \bbC_1
\rightarrow \bbC_0 \times \bbC_0 \, ,
\]
mapping a horizontal
arrow~$F \colon X \rightarrow Y$ to~$(X,Y)$ and a square as
in~\eqref{equ:typicalsquare} to~$(u,v) : (X,Y) \rightarrow (X',Y')$,
is a Grothendieck fibration if and only if it is a Grothendieck opfibration~\cite[Theorem~4.1]{ShulmanM:frabmf}.
When these conditions hold, the double category~$\bbC$ is said to be a framed
bicategory~\cite[Definition~4.2]{ShulmanM:frabmf}.
As explained in~\cite[Examples~4.4]{ShulmanM:frabmf}
and~\cite[Proposition~3.6]{GambinoN:polfpm},
the double categories~$\Span_\catE$ and~$\Poly_\catE$
are framed bicategories.

\begin{lem}[Shulman] \label{thm:shulman1} If $\bbC$ is a framed bicategory,
for every vertical arrow $u \colon X \rightarrow~X'$ there exist
horizontal arrows $\ladj{u} : X \rightarrow X'$ and $\pbk{u} : X'
\rightarrow X$ together with squares
\[
\xymatrix{
X \ar[r]^{\ladj{u}} \ar[d]_u   \ar@{}[dr]|{\alpha_u} & X' \ar@{=}[d]  \\
X'  \ar@{=}[r] & X'} \qquad
\xymatrix{
X' \ar@{=} \ar@{=}[d] \ar[r]^{\pbk{u}} \ar@{}[dr]|{\beta_u}  & X \ar[d]^{u} \\
X' \ar@{=}[r] & X' } \qquad
\xymatrix{
X \ar@{=}[r]  \ar[d]_{u} \ar@{}[dr]|{\gamma_u}  & X \ar@{=}[d] \\
X' \ar[r]_{\pbk{u}} & X } \qquad
\xymatrix{
X \ar@{=}[r] \ar@{=}[d] \ar@{}[dr]|{\delta_u}  & X \ar[d]^u  \\
X \ar[r]_{\ladj{u}} & X' }
\]
satisfying the equalities

\[
\xycenter{
X \ar@{=}[r]  \ar@{=}[d] \ar@{}[dr]|{\delta_u} & X   \ar[d]^{u}  \\
X  \ar[d]_{u}  \ar[r]  \ar@{}[dr]|{\alpha_u} &  X'  \ar@{=}[d] \\
X' \ar@{=}[r] &  X'}  \quad =  \quad
\xycenter{
X \ar[d]_{u} \ar@{=}[r]   & X  \ar[d]^{u}\\
X' \ar@{=}[r] & X' }  \quad = \quad
\xycenter{
X  \ar[d]_{u} \ar@{=}[r] \ar@{}[dr]|{\gamma_u} & X \ar@{=}[d] \\
X' \ar@{=}[d] \ar[r]  \ar@{}[dr]|{\beta_u} & X  \ar[d]^{u} \\
X' \ar@{=}[r]& X' ,}
\]

\smallskip

\[
\xycenter{
X \ar@{=}[r] \ar@{=}[d]   \ar@{}[dr]|{\delta_u} & X \ar[r]   \ar[d] \ar@{}[dr]|{\alpha_u} &  X' \ar@{=}[d]  \\
X \ar[r]  & X' \ar@{=}[r]  & X' } \quad = \quad
\xycenter{
X \ar[r]^{\ladj{u}}   \ar@{=}[d] & X' \ar@{=}[d] \\
X \ar[r]_{\ladj{u}} & X'}
\]

and

\[
\xycenter{
X'  \ar[r]  \ar@{=}[d] \ar@{}[dr]|{\beta_u}  & X   \ar[d] \ar@{=}[r]  \ar@{}[dr]|{\gamma_u} &  X \ar@{=}[d]  \\
X'   \ar@{=}[r]    & X' \ar[r]  & X } \quad = \quad
\xycenter{
X' \ar[r]^{\pbk{u}} \ar@{=}[d] & X \ar@{=}[d] \\
X' \ar[r]_{\pbk{u}} & X.}
\]
\end{lem}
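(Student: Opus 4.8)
The plan is to recall Shulman's argument from \cite{ShulmanM:frabmf}. The horizontal arrows $\ladj{u}$ and $\pbk{u}$ will be produced as, respectively, the domain of a cartesian lift and the codomain of an opcartesian lift of a horizontal unit arrow along the functor $(\partial_0, \partial_1) \colon \bbC_1 \rightarrow \bbC_0 \times \bbC_0$, and the four squares and their equations will then be read off from the corresponding universal properties. The key point is that, $\bbC$ being a framed bicategory, $(\partial_0, \partial_1)$ is simultaneously a fibration and an opfibration \cite{ShulmanM:frabmf}. We write $\circ$ for vertical composition of squares (equivalently, composition in $\bbC_1$) and $\mid$ for horizontal composition, and we fix a vertical arrow $u \colon X \rightarrow X'$.

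First I would construct $\ladj{u}$ and $\alpha_u$. Take $\alpha_u$ to be a chosen cartesian lift, with respect to the fibration $(\partial_0, \partial_1)$, of the arrow $(u, 1_{X'}) \colon (X, X') \rightarrow (X', X')$ of $\bbC_0 \times \bbC_0$ at the horizontal unit $1_{X'} \in \bbC_1$. Its boundary is then forced: $\alpha_u$ has horizontal target $1_{X'}$, left vertical edge $u$, right vertical edge $1_{X'}$, and therefore horizontal source some horizontal arrow from $X$ to $X'$, which we name $\ladj{u}$. Thus $\alpha_u$ has exactly the shape required in the statement. Dually, let $\gamma_u$ be a chosen opcartesian lift, with respect to the opfibration, of $(u, 1_X) \colon (X, X) \rightarrow (X', X)$ at the horizontal unit $1_X$; its horizontal target is a horizontal arrow from $X'$ to $X$, which we name $\pbk{u}$, and $\gamma_u$ has the shape in the statement.

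Next, I would obtain $\delta_u$ and $\beta_u$ by factoring the identity square on the vertical arrow $u$ (the square appearing in the middle of the first displayed identity). Using the factorization $(u, u) = (u, 1_{X'}) \circ (1_X, u)$ in $\bbC_0 \times \bbC_0$ together with the cartesian universal property of $\alpha_u$, there is a unique square $\delta_u$ lying over $(1_X, u)$, with horizontal source $1_X$ and horizontal target $\ladj{u}$, such that $\alpha_u \circ \delta_u$ is the identity square on $u$. By construction $\delta_u$ has the shape in the statement, and the equation just produced is precisely the left-hand equality among the first batch of displayed identities. Symmetrically, using $(u, u) = (1_{X'}, u) \circ (u, 1_X)$ and the opcartesian universal property of $\gamma_u$, we get the unique square $\beta_u$ over $(1_{X'}, u)$, with horizontal source $\pbk{u}$ and horizontal target $1_{X'}$, such that $\beta_u \circ \gamma_u$ is the identity square on $u$; this is the right-hand equality.

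It remains to verify the two triangle identities, where the only genuine bookkeeping occurs. For the second displayed identity, note that $\delta_u \mid \alpha_u$ is a square from $\ladj{u}$ to $\ladj{u}$ lying over the identity arrow of $(X, X')$. Forming $\alpha_u \circ (\delta_u \mid \alpha_u)$ and rewriting it by the interchange law, it reduces to $(\alpha_u \circ \delta_u) \mid \alpha_u$, and hence, by the already established equality $\alpha_u \circ \delta_u = (\text{identity square on } u)$ and the unit law for horizontal composition, to $\alpha_u$ itself. Since $\alpha_u$ equals $\alpha_u \circ (\text{identity square on } \ladj{u})$ as well, and both $\delta_u \mid \alpha_u$ and the identity square on $\ladj{u}$ lie over the identity of $(X, X')$, the uniqueness clause in the cartesian property of $\alpha_u$ forces $\delta_u \mid \alpha_u$ to equal the identity square on $\ladj{u}$, which is the second displayed identity. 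The third displayed identity, $\beta_u \mid \gamma_u = (\text{identity square on } \pbk{u})$, is established in exactly the dual way, using the uniqueness in the opcartesian property of $\gamma_u$. I expect this last step to be the main obstacle: the argument is short, but it calls for care in tracking the horizontal-unit coherence squares and in applying interchange in the correct form --- a task made easier by the paper's standing convention of treating horizontal composition as if it were strict.
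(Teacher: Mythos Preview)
Your proposal is correct and follows exactly the approach the paper intends: the paper's own proof is simply a citation of \cite[Theorem~4.1]{ShulmanM:frabmf}, and what you have written is a faithful unpacking of Shulman's argument there, producing $\ladj{u}$ and $\pbk{u}$ as (op)cartesian lifts of horizontal units and deriving the four squares and their identities from the corresponding universal properties. The only difference is that you spell out the details (including the interchange/uniqueness argument for the triangle identities) where the paper is content to refer the reader to Shulman.
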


\begin{proof} See~\cite[Theorem~4.1]{ShulmanM:frabmf}.
\end{proof}

Lemma~\ref{thm:shulman1} can be expressed equivalently by saying
that every vertical arrow~$u$ in~$\bbC$ has an orthogonal companion
$\ladj{u}$ and an orthogonal adjoint $\pbk{u}$ in the terminology
of~\cite{GrandisM:adjdc}.

\begin{lem}[Shulman] \label{thm:shulman2} Let $\bbC$ be a framed bicategory.
Let $u : X \rightarrow X'$ be a vertical arrow
in $\bbC$. If we define
\[
\xycenter{
X  \ar@{=}[d] \ar@{=}[rr]  \ar@{}[drr]|{\eta_u} & &  X \ar@{=}[d] \\
X \ar[r]_{\ladj{u}} & X' \ar[r]_{\pbk{u}} & X }   \quad \defeq \quad
\xycenter{
X  \ar@{=}[d] \ar@{=}[r] \ar@{}[dr]|{\delta_u} & X \ar@{=}[r] \ar[d] \ar@{}[dr]|{\gamma_u} &  X\ar@{=}[d] \\
X \ar[r]_{\ladj{u}} & X' \ar[r]_{\pbk{u}} & X }
\]
and
\[
\xycenter{
X' \ar[r]^{\pbk{u}}  \ar@{=}[d]  \ar@{}[drr]|{\varepsilon_u} &  X \ar[r]^{\ladj{u}}      & X' \ar@{=}[d]  \\
X' \ar@{=}[rr] &               & X' } \quad \defeq \quad
\xycenter{
X' \ar[r]^{\pbk{u}}  \ar@{=}[d]  \ar@{}[dr]|{\beta_u} &  X \ar[r]^{\ladj{u}} \ar[d]  \ar@{}[dr]|{\alpha_u} & X' \ar@{=}[d]  \\
X' \ar@{=}[r] &         X' \ar@{=}[r]      & X', }\smallskip
\]
then the following versions of the triangle identities hold:
\[
\xycenter{
X'  \ar@{=}[d] \ar[r]^{\pbk{u}} & X \ar@{=}[r]  \ar@{=}[d] \ar@{}[drr]|{\eta_u}  & X \ar@{=}[r] & X \ar@{=}[d] \\
X'   \ar@{=}[d] \ar[r]  \ar@{}[drr]|{\varepsilon_u}  & X \ar[r] & X' \ar@{=}[d] \ar[r]  & X \ar@{=}[d] \\
X' \ar@{=}[rr] &  & X' \ar[r]_{\pbk{u}} & X} \quad = \quad
\xycenter{
X' \ar@{=}[d]  \ar[r]^{\pbk{u}} & X \ar@{=}[d] \\
X' \ar[r]_{\pbk{u}} & X , }
\]

\[
\xycenter{
X \ar@{=}[rr] \ar@{=}[d]  \ar@{}[drr]|{\eta_u} &  & X \ar[r]^{\ladj{u}} \ar@{=}[d]  & X' \ar@{=}[d] \\
X   \ar@{=}[d] \ar[r]  & X' \ar[r]  \ar@{=}[d]  \ar@{}[drr]|{\varepsilon_u} & X \ar[r]  & X \ar@{=}[d] \\
X   \ar[r]_{\ladj{u}} & X \ar@{=}[rr]  &  & X} \quad = \quad
\xycenter{
X \ar@{=}[d]  \ar[r]^{\ladj{u}} & X' \ar@{=}[d] \\
X \ar[r]_{\ladj{u}} & X' . }
\]
\end{lem}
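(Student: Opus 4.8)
The statement says that the squares $\alpha_u,\beta_u,\gamma_u,\delta_u$ of Lemma~\ref{thm:shulman1} exhibit $\ladj u$ as a left adjoint to $\pbk u$ in the horizontal $2$-category $\mathcal{H}_\bbC$, with unit $\eta_u$ and counit $\varepsilon_u$, and the two displayed equations are precisely the triangle identities for this adjunction. The plan is to prove each of them by a short pasting computation using nothing more than the three equalities of Lemma~\ref{thm:shulman1} and the interchange law for squares. Throughout, write $1_u$ for the identity square on the vertical arrow $u$ and $1_{\ladj u}$, $1_{\pbk u}$ for the identity squares on those horizontal arrows, and work as if horizontal composition were strict, as licensed by~\cite[Theorem~7.5]{GrandisM:limdc}.

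For the first triangle identity, substituting the definitions of $\eta_u$ and $\varepsilon_u$ turns the left-hand side into a pasting of two rows of three squares, $\bigl[\,1_{\pbk u} \mid \delta_u \mid \gamma_u\,\bigr]$ stacked above $\bigl[\,\beta_u \mid \alpha_u \mid 1_{\pbk u}\,\bigr]$, in which the three columns match up. Applying the interchange law regroups this as the horizontal composite of three vertical composites: the outer two are $1_{\pbk u}$ over $\beta_u$ (which is just $\beta_u$) and $\gamma_u$ over $1_{\pbk u}$ (which is just $\gamma_u$), while the middle one is $\delta_u$ over $\alpha_u$, which equals $1_u$ by the first equality of Lemma~\ref{thm:shulman1}. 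Hence, absorbing the identity square $1_u$, the left-hand side reduces to $\beta_u$ pasted beside $\gamma_u$, which is $1_{\pbk u}$ by the third equality of Lemma~\ref{thm:shulman1}, as required.

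The second triangle identity is proved by the mirror argument. After substitution, its left-hand side becomes $\bigl[\,\delta_u \mid \gamma_u \mid 1_{\ladj u}\,\bigr]$ stacked above $\bigl[\,1_{\ladj u} \mid \beta_u \mid \alpha_u\,\bigr]$; the interchange law turns this into the horizontal composite of $\delta_u$ over $1_{\ladj u}$ (which is $\delta_u$), then $\gamma_u$ over $\beta_u$, then $1_{\ladj u}$ over $\alpha_u$ (which is $\alpha_u$). The middle block equals $1_u$, this time by the other half of the first equality of Lemma~\ref{thm:shulman1}, so the left-hand side reduces to $\delta_u$ pasted beside $\alpha_u$, which is $1_{\ladj u}$ by the second equality of Lemma~\ref{thm:shulman1}. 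I expect the only real difficulty to be organizational: one must apply interchange in the right order so that the blocks to be cancelled are genuine composable sub-pastings, and keep careful track of the boundaries of the squares involved. There is no conceptual content beyond Lemma~\ref{thm:shulman1}; indeed the whole statement is part of~\cite[Theorem~4.1]{ShulmanM:frabmf}, to which one may refer for the detailed verification.
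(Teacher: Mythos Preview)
Your pasting computation is correct: expanding $\eta_u$ and $\varepsilon_u$, applying interchange column-by-column, using the two halves of the first equality of Lemma~\ref{thm:shulman1} to collapse the middle column to $1_u$, and then the second and third equalities to obtain $1_{\ladj u}$ and $1_{\pbk u}$, is exactly right. The paper itself does not carry out this computation; its proof is the single line ``See~\cite[Proposition~5.3]{ShulmanM:frabmf}.'' So your approach differs only in that you supply a self-contained argument where the paper defers to Shulman. (A small point: the precise reference in Shulman is Proposition~5.3, not Theorem~4.1; the latter is what the paper cites for Lemma~\ref{thm:shulman1}.)
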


\begin{proof} See~\cite[Proposition~5.3]{ShulmanM:frabmf}.
\end{proof}

\subsection*{Monads in framed bicategories.} Let $\bbC$ be a double category. We have the diagram
\begin{equation}
\label{equ:general0}
\xycenter{
\Mnd (\bbC)_0 \ar[rr]^{U_0} \ar[dr]_{\partial_M} & &  \End ( \bbC)_0 \ar[dl]^{\partial_E} \\
 & \bbC_0 , & }
 \end{equation}
 where $\partial_E$ and $\partial_M$ send an endomorphism and a monad, respectively, to their
 underlying object and $U_0$ is the vertical part of the forgetful double functor
 $U$ of Definition~\ref{thm:deffrem}.

\begin{prop} \label{thm:grothfib}
If $\bbC$ is a framed bicategory, the functors
\[
\partial_E : \End(\bbC)_0 \rightarrow \bbC_0 \, , \quad
\partial_M \colon \Mnd(\bbC)_0 \rightarrow \bbC_0
\]
are Grothendieck fibrations and the functor $U_0 \colon \Mnd(\bbC)_0
\rightarrow~\End(\bbC)_0$ is a fibered functor relatively to these
fibrations.
\end{prop}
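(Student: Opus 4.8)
The plan is to construct explicit cartesian lifts in $\End(\bbC)_0$ and $\Mnd(\bbC)_0$ using the orthogonal companions and adjoints supplied by Lemma~\ref{thm:shulman1}. Given an endomorphism $(X',P')$ and a vertical arrow $u : X \to X'$ in $\bbC_0$, I would define the restriction $u^*(X',P')$ to be the endomorphism $(X, P)$ with $P \defeq \pbk{u} \, P' \, \ladj{u} : X \to X$; the companion square $\delta_u$ and adjoint square $\beta_u$ together produce the required square $\bar u : P \to P'$ over $u$ (that is, a square of the shape in Definition~\ref{thm:end}(iii)), and this is the candidate cartesian lift. Cartesianness amounts to: given any endomorphism $(X'',P'')$, vertical arrow $w : X'' \to X$, and vertical endomorphism map $(X'',P'') \to (X',P')$ over $uw$, there is a unique factorization through $\bar u$. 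Here the universal property of the fibration $(\partial_0,\partial_1) : \bbC_1 \to \bbC_0 \times \bbC_0$ (framedness) gives uniqueness and existence of the underlying square, and the counit/unit data $\varepsilon_u, \eta_u$ of Lemma~\ref{thm:shulman2}, with the triangle identities, are what verify that the factorizing square is again a morphism of endomorphisms. So the first half of the proof is: define the lift, check it is an endomorphism map, check the universal property using Shulman's companion/adjoint calculus.

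Next I would treat $\partial_M$. The key observation is that if $(X',P')$ carries a monad structure $(\mu_{P'}, \eta_{P'})$, then $P = \pbk{u} P' \ladj{u}$ inherits a monad structure: the multiplication is built from $\mu_{P'}$ by inserting the counit square $\varepsilon_u : \pbk{u}\ladj{u} \to \id$ in the middle (to contract $P' \ladj{u} \pbk{u} P'$ down to $P' P'$), and the unit is built from $\eta_{P'}$ together with $\eta_u : \id \to \ladj{u}\pbk{u}$. The associativity and unit laws for this new monad follow from those for $P'$ together with the triangle identities of Lemma~\ref{thm:shulman2} — this is the routine but slightly lengthy pasting-diagram verification. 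With this monad structure, the same square $\bar u : P \to P'$ constructed above is automatically a vertical monad map (one checks the two conditions in Definition~\ref{thm:mnd}(iii), again reducing to the triangle identities), and it is cartesian for $\partial_M$ by essentially the same argument as for $\partial_E$: any competing vertical monad map factors uniquely through it as a square, and one checks the factorization respects $\mu$ and $\eta$.

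Finally, that $U_0$ is a fibered functor is almost immediate from the construction: $U_0$ sends the monad $(X,\pbk{u}P'\ladj{u})$ to the endomorphism $(X,\pbk{u}P'\ladj{u})$ and sends the chosen cartesian lift $\bar u$ in $\Mnd(\bbC)_0$ to the chosen cartesian lift $\bar u$ in $\End(\bbC)_0$ — the underlying squares are literally the same, only the monad structure has been forgotten. Hence $U_0$ preserves cartesian arrows and commutes with $\partial_M, \partial_E$ as in~\eqref{equ:general0}, which is exactly the assertion.

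The main obstacle I expect is purely bookkeeping: writing down the monad structure on $P = \pbk{u}P'\ladj{u}$ and verifying its axioms and the monad-map conditions for $\bar u$ require several four- or five-stage pasting diagrams, and one must be careful about where each of $\delta_u,\beta_u,\gamma_u,\alpha_u,\eta_u,\varepsilon_u$ is inserted and invoke the correct triangle identity at each step. Conceptually there is no difficulty — everything is forced by Lemmas~\ref{thm:shulman1} and~\ref{thm:shulman2} — so in the write-up I would state the lifts explicitly, indicate that the axioms follow from the triangle identities, and suppress the diagram chases, exactly as Shulman does for the analogous base-change constructions in framed bicategories.
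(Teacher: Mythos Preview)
Your proposal is correct and matches the paper's approach almost exactly: the paper constructs the same cartesian lift $(X,\,\pbk{u}P'\ladj{u})$, equips it with a monad structure via $\varepsilon_u$ and $\eta_u$ just as you describe, and deduces that $U_0$ is fibered from the fact that the chosen lifts coincide. Two minor points: the paper first obtains the fibration property of $\partial_E$ in one line by exhibiting it as the pullback of $(\partial_0,\partial_1)$ along the diagonal $\Delta:\bbC_0\to\bbC_0\times\bbC_0$ (a shortcut you could add before the explicit construction), and the lift square $\bar u$ is actually assembled from $\alpha_u$ and $\beta_u$, not $\delta_u$ and $\beta_u$ --- check the vertical boundaries.
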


\begin{proof} Writing  $\Delta : \bbC_0 \rightarrow \bbC_0 \times \bbC_0$ for  the diagonal functor,
the functor~$\partial_E$  fits into the pullback diagram
\[
\xymatrix @C=8ex {
\End(\bbC)_0 \ar[r] \ar[d]_{\partial_E}    & \bbC_1 \ar[d]^{(\partial_0,\partial_1)} \\
\bbC_0 \ar[r]_-{\Delta}  & \, \bbC_0 \times \bbC_0.}
\]
We then have that $\partial_E$ is a Grothendieck fibration since it
is a pullback of $(\partial_0, \partial_1)$, which is a Grothendieck
fibration by the hypothesis that $\bbC$ is a framed bicategory.
Using Lemmas~\ref{thm:shulman1} and~\ref{thm:shulman2}, we can
 define explicitly a base change operation for the
Grothendieck fibration $\partial_E$, as follows. Let~$u : X \rightarrow X'$ be a
map in $\bbC_0$ and~$(X',P')$ an endomorphism in $\bbC$. The base change of
$(X',P')$ along $u$ is defined to be the endomorphism~$(X,P)$, where $P : X
\rightarrow X$ is the composite
\[
\xymatrix{
X \ar[r]^{\ladj{u}} & X' \ar[r]^{P'} & X' \ar[r]^{\pbk{u}} & X } \, .
 \]
The required cartesian morphism from $(X,P)$ to $(X',P')$ in $\End(\bbC)_0$ (\myemph{i.e.}~the
cartesian lift of $u$)  is given
by the vertical endomorphism map~$(u, \bar{u}) : (X,P) \rightarrow (X', P')$, where $\bar{u}$ is the square
\[
\xymatrix{
X  \ar[r]^{\ladj{u}} \ar[d]_u  \ar@{}[dr]|{\alpha_u} & X' \ar[r]^{P'}  \ar@{=}[d] & X' \ar[r]^{\pbk{u}}   \ar@{=}[d]   \ar@{}[dr]|{\beta_u} & X \ar[d]^u \\
X' \ar@{=}[r] & X'  \ar[r]_{P'}  & X' \ar@{=}[r]      & X'.}
\]
The verification of the required universal property is
straightforward.

To show that~$\partial_M$ is  a Grothendieck
fibration, we first observe that if $(X',P')$ is a monad, then~$(X,P)$ inherits a monad structure: its multiplication is the square
\[
\xymatrix{
X  \ar[r]^{\ladj{u}}  \ar@{=}[d] & X' \ar[r]^{P'} \ar@{=}[d]  & X' \ar[r]^{\pbk{u}} \ar@{=}[d] \ar@{}[drr]|{\varepsilon_u}
& X \ar[r]^{\ladj{u}}
& X' \ar[r]^{P'} \ar@{=}[d] & X' \ar[r]^{\pbk{u}} \ar@{=}[d] & X \ar@{=}[d]  \\
X \ar[r] \ar@{=}[d] & X' \ar[r]_{P'}  \ar@{=}[d]
\ar@{}[drrrr]|{\mu}
 & X' \ar@{=}[rr] &  & X' \ar[r]_{P'}  & X' \ar[r] \ar@{=}[d] & X \ar@{=}[d] \\
X \ar[r]_{\ladj{u}} & X' \ar[rrrr]_{P'} &  & & &   X' \ar[r]_{\pbk{u}} & X }
\]
and its unit is the square
\[
\xymatrix{
X \ar@{=}[r]  \ar@{=}[d] \ar@{}[drrr]|{\eta_u} & X \ar@{=}[r] & X \ar@{=}[r] & X \ar@{=}[d] \\
X \ar[r]^{\ladj{u}}   \ar@{=}[d] & X' \ar@{=}[r] \ar@{=}[d]  \ar@{}[dr]|{\eta} & X' \ar[r]^{\pbk{u}} \ar@{=}[d] & X \ar@{=}[d]  \\
X \ar[r]_{\ladj{u}}  & X' \ar[r]_{P'} & X' \ar[r]_{\pbk{u}}  & X. }
\]
The monad axioms are easily verified. Now it only remains to verify that the cartesian
lift $(u, \bar{u})$ is a vertical monad map and that it is cartesian for $\partial_M$. This
verification is straightforward, using Lemmas~\ref{thm:shulman1} and~\ref{thm:shulman2}.
This also shows that $U_0$ is fibered as claimed.
\end{proof}

\begin{lem} \label{thm:verhorbij} Let $(X,P)$ and $(X',P')$ be endomorphisms in a framed bicategory
$\bbC$. There is a bijection between vertical endomorphism  maps
$(u, \bar{u}) : (X, P) \rightarrow (X',P')$ and horizontal
endomorphism maps of the form $(\pbk{u}, \phi) : (X', P')
\rightarrow (X,P)$, which restricts to a bijection between vertical
monad  maps  and horizontal monad maps when $(X,P)$ and $(X',P')$
are monads.
\end{lem}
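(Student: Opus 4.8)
The plan is to deduce the bijection from Proposition~\ref{thm:grothfib} together with the adjunction $\ladj{u} \dashv \pbk{u}$ in the horizontal 2-category $\mathcal{H}_\bbC$, which is provided by Lemma~\ref{thm:shulman2}. Fix a vertical arrow $u \colon X \to X'$; it is enough to produce the bijection with $u$ held fixed. By Proposition~\ref{thm:grothfib}, the functor $\partial_E \colon \End(\bbC)_0 \to \bbC_0$ is a Grothendieck fibration, and, as recalled in its proof, the cartesian lift of $u$ at an endomorphism $(X', P')$ has source the endomorphism $(X, \pbk{u} \circ P' \circ \ladj{u})$, with cartesian morphism obtained by pasting the structural squares $\alpha_u$ and $\beta_u$ of Lemma~\ref{thm:shulman1} with the identity square on $P'$. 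Therefore any vertical endomorphism map $(u, \bar{u}) \colon (X, P) \to (X', P')$ factors uniquely through this cartesian lift via a vertical endomorphism map over $\id_X$, and a vertical endomorphism map over $\id_X$ is exactly a square with identity vertical sides, that is, a 2-cell in $\mathcal{H}_\bbC$. Hence vertical endomorphism maps $(u, \bar{u}) \colon (X, P) \to (X', P')$ are in bijection with 2-cells $\psi \colon P \to \pbk{u} \circ P' \circ \ladj{u}$.

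Next I would apply the adjunction $\ladj{u} \dashv \pbk{u}$ of Lemma~\ref{thm:shulman2}, with unit $\eta_u$ and counit $\varepsilon_u$. Taking mates along it gives a bijection between 2-cells $\psi \colon P \to \pbk{u} \circ P' \circ \ladj{u}$ and 2-cells $\phi \colon P \circ \pbk{u} \to \pbk{u} \circ P'$: one passes from $\psi$ to $\phi$ by whiskering with $\pbk{u}$ and composing with $\varepsilon_u$, and from $\phi$ to $\psi$ by whiskering with $\ladj{u}$ and composing with $\eta_u$. Since a 2-cell $\phi \colon P \circ \pbk{u} \to \pbk{u} \circ P'$ is precisely a horizontal endomorphism map $(\pbk{u}, \phi) \colon (X', P') \to (X, P)$, composing this bijection with the one of the first paragraph yields the asserted bijection. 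Concretely, $\bar{u}$ is sent to the horizontal endomorphism map whose underlying square is the horizontal pasting of $\beta_u$, $\bar{u}$ and $\gamma_u$.

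For the refinement when $(X, P)$ and $(X', P')$ are monads, I would repeat the argument using the fibration $\partial_M \colon \Mnd(\bbC)_0 \to \bbC_0$ of Proposition~\ref{thm:grothfib}: when $(X', P')$ is a monad, $\pbk{u} \circ P' \circ \ladj{u}$ inherits the monad structure constructed in the proof of Proposition~\ref{thm:grothfib} from $\mu_{P'}$, $\eta_{P'}$, $\eta_u$ and $\varepsilon_u$, and the cartesian lift is a vertical monad map. Hence vertical monad maps $(X, P) \to (X', P')$ correspond to vertical monad maps over $\id_X$ from $(X, P)$ to $(X, \pbk{u} \circ P' \circ \ladj{u})$, that is, to 2-cells $\psi \colon P \to \pbk{u} \circ P' \circ \ladj{u}$ compatible with the multiplications and units in the sense of Definition~\ref{thm:mnd}. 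It then remains to verify that, under the mate bijection of the second paragraph, $\psi$ is compatible with the multiplications and units if and only if the corresponding $\phi$ satisfies the two defining conditions of a horizontal monad map in Definition~\ref{thm:mnd}. This last point is the step I expect to be the main obstacle: it is a direct but somewhat lengthy diagram chase, rewriting each of the two monad-map axioms for $\phi$ into the corresponding equation for $\psi$ by repeated use of the triangle identities of Lemma~\ref{thm:shulman2} and of the explicit monad structure on $\pbk{u} \circ P' \circ \ladj{u}$. Once this is done, the bijection of the first two paragraphs restricts as claimed, completing the proof.
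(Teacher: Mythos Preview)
Your argument is correct, but it takes a noticeably different route from the paper's. The paper proceeds by brute force: it writes down explicit formulas for the two directions of the bijection---sending $(u,\bar{u})$ to $(\pbk{u},\phi_u)$ with $\phi_u$ the horizontal pasting of $\beta_u$, $\bar{u}$, $\gamma_u$, and conversely sending $(\pbk{u},\phi)$ to $(u,\bar{u}_\phi)$ with $\bar{u}_\phi$ built from $\gamma_u$, $\phi$, $\beta_u$---and then simply asserts that the identities of Lemma~\ref{thm:shulman1} show these are mutually inverse and that the monad conditions are preserved in both directions. No intermediate object is introduced, and Proposition~\ref{thm:grothfib} is not invoked.

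Your two-step factorisation (cartesian lifting along the fibration $\partial_E$, then the mate correspondence for the adjunction $\ladj{u}\dashv\pbk{u}$) is more conceptual: it explains \emph{why} the bijection exists rather than merely exhibiting it, and it makes the monad refinement plausible by reducing it to the standard fact that mates along an adjunction carry monad morphisms to monad morphisms. The cost is that you rely on Proposition~\ref{thm:grothfib} and on the triangle identities of Lemma~\ref{thm:shulman2}, and the final diagram chase you flag (matching the vertical monad-map axioms for $\psi$ with the horizontal monad-map axioms for $\phi$) is genuinely the same amount of work as the paper's direct verification, just reorganised. Your concrete description of the resulting $\phi$ as the pasting of $\beta_u$, $\bar{u}$, $\gamma_u$ agrees with the paper's formula, so the two approaches produce the same bijection.
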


\begin{proof}
For a vertical endomorphism map $(u, \bar{u}) : (X, P)
\rightarrow (X',P')$, define the horizontal endomorphism map
$(\pbk{u}, \phi_u) : (X', P') \rightarrow (X,P)$ by letting $\phi_u$
be the square
\[
\xycenter{ X' \ar[r]^{\pbk{u}} \ar@{=}[d]  \ar@{}[drr]|{\phi_u} & X
\ar[r]^P & X \ar@{=}[d] \\
X' \ar[r]_{P'}  & X' \ar[r]_{\pbk{u}}  & X } \quad \defeq \quad
\xycenter{ X' \ar[r]^{\pbk{u}} \ar@{=}[d] \ar@{}[dr]|{\beta_u} & X
\ar[r]^P \ar[d] \ar@{}[dr]|{\bar{u}} &
X \ar@{=}[r] \ar[d] \ar@{}[dr]|{\gamma_u} & X \ar@{=}[d] \\
X' \ar@{=}[r] & X' \ar[r]_{P'}  & X' \ar[r]_{\pbk{u}} & X .}
\]
In the other direction, given a horizontal endomorphism map
$(\pbk{u}, \phi) \colon (X', P') \rightarrow~(X,P)$, define
the vertical endomorphism map $(u, \bar{u}_\phi) : (X,P) \rightarrow
(X',P')$ by letting $\bar{u}_\phi$ be the square
\[
\xycenter{ X \ar[r]^P \ar[d]_u  \ar@{}[dr]|{\bar{u}_\phi} & X \ar[d]^u \\
X' \ar[r]_{P'} & X' } \quad \defeq \quad \xycenter{
X \ar@{=}[r] \ar[d]_u \ar@{}[dr]|{\gamma_u} & X \ar[r]^P \ar@{=}[d] & X \ar@{=}[d] \\
X' \ar[r] \ar@{=}[d] \ar@{}[drr]|{\phi} & X \ar[r] & X \ar@{=}[d] \\
X' \ar[r] \ar@{=}[d] & X' \ar[r]  \ar@{=}[d] \ar@{}[dr]|{\beta_u} & X \ar[d]^u \\
X' \ar[r]_{P'} & X' \ar@{=}[r] & X' .}
\]
Using Lemma~\ref{thm:shulman1}, it is possible to show that these functions are mutually inverse,
that~$(\pbk{u}, \phi_u)$ is a horizontal monad map if~$(u, \bar{u})$ is a
vertical monad map, and that~$(u, \bar{u}_\phi)$ is a vertical monad
map if~$(\pbk{u}, \phi)$ is a horizontal monad map.
\end{proof}

Let us point out that the bijection defined in the proof of Lemma ~\ref{thm:verhorbij} is an example
of a cofolding in the sense of~\cite[Definition~3.16]{FioreT:pseapd}.

\subsection*{Free monads in a framed bicategory.}
We now consider the construction of free monads in a framed bicategory $\bbC$. Since 
the functor~$U_0$ in~\eqref{equ:general0} is fibered, a sufficient condition for it to have
a left adjoint is that each of its fibers has a left adjoint. In this case, the free monad on an
endomorphism~$(X,P)$ has the form~$(X,P^*)$ and 
the component of the unit~$(\iota_X, \iota_P) : (X,P) \rightarrow
(X,P^*)$ is a vertical endomorphism map of the form~$(1_X, \iota_P) :
(X, P) \rightarrow (X, P^*)$, where~$\iota_P$ is a square of the form
\[
\xymatrix{
X \ar[r]^{P} \ar@{=}[d]  \ar@{}[dr]|{\iota_P}& X \ar@{=}[d] \\
X \ar[r]_{P^*} & X.}
\]
The universal property in the fiber asserts that for every
endomorphism square of the form
\[
\xymatrix{
X \ar[r]^{P} \ar@{=}[d]  \ar@{}[dr]|{\alpha}& X \ar@{=}[d] \\
X \ar[r]_{P'} & X,}
\]
where~$(X,P')$ is a monad,  there exists a unique monad square of the
form
\[
\xymatrix{
X \ar[r]^{P^*} \ar@{=}[d]  \ar@{}[dr]|{\alpha^\sharp}& X \ar@{=}[d] \\
X \ar[r]_{P'} & X}
\]
such that
\[
\xycenter{
X \ar[r]^{P} \ar@{=}[d]  \ar@{}[dr]|{\alpha}& X \ar@{=}[d] \\
X \ar[r]_{P'} & X} \quad = \quad \xycenter{
X \ar[r]^{P} \ar@{=}[d]  \ar@{}[dr]|{\iota_P}& X \ar@{=}[d] \\
X \ar[r]  \ar@{=}[d]  \ar@{}[dr]|{\alpha^\sharp}& X \ar@{=}[d] \\
X \ar[r]_{P'} & X.}
\]
The universal property in the fiber implies a more general universal property,
with respect to general monads (and not just monads
with~$X$ as underlying object) and general vertical endomorphism maps
(and not just the special ones considered above), as in item~\eqref{item:adj2}
of Remark~\ref{thm:charadj}.  Note, however, that  the left adjoint to 
$U_0$ constructed from the left adjoints to its fibers need not be a fibered left
adjoint, since the so-called Beck-Chevalley conditions are not necessarily
satisfied~\cite[\S1.8]{JacobsB:catltt}.

\begin{examp} Let us consider the framed bicategory $\Sp_\catE$ associated to a category $\catE$  with finite limits. The
diagram in~\eqref{equ:general0} becomes
\[
\xymatrix{
\Cat_\catE \ar[rr]^{U_0} \ar[dr]_{\partial_M} &  & \Grph_\catE   \ar[dl]^{\partial_E} \\
  & \catE, & }
\]
where $\partial_E$ sends a graph to its object of vertices and $\partial_M$ sends a small category to 
its object of objects. Since $\Sp_\catE$ is a framed bicategory, the preceding remarks reduce to the familiar fact that the free category on a graph has the object of vertices of the graph as its object of objects.
\end{examp}

\begin{examp} Let us consider the framed bicategory $\Poly_\catE$ associated to a locally cartesian closed
category~$\catE$ with finite disjoint coproducts. For $\Poly_\catE$, the diagram in~\eqref{equ:general0} amounts to:
\[
\xymatrix{
\PolyMnd_\catE \ar[rr]^{U_0}  \ar[dr]_{\partial_M} &  & \PolyEnd_\catE  \ar[dl]^{\partial_E} \\
&  \catE .& }
\]
In this case, the remarks above amount to the fact, exploited in the proof of~\cite[Corollary~4.7]{GambinoN:polfpm}, that
to prove the universal property of the free monad on a polynomial endofunctor with respect to maps in~$\PolyEnd_\catE$,
it is sufficient to check it with respect to a special class of them.
\end{examp}

Theorem~\ref{thm:freemndfrmbicat}, which is our main result, gives sufficient conditions for a
framed bicategory to admit the construction of free monads, facilitating the verification of this property in
our examples. Recall that, for a double category $\bbC$, we write $\bbC_0$ for its category of
objects and vertical arrows and $\bbC_1$ for its category of horizontal arrows and squares.

\begin{thm} \label{thm:freemndfrmbicat}
Let $\bbC$ be a framed bicategory such that the category $\bbC_1$ 
has equalizers and the source and target functors $\partial_0, \partial_1 :
\bbC_1 \rightarrow \bbC_0$ preserve them.  If the horizontal 2-category of $\bbC$ has local coproducts
and admits the construction of free monads, then $\bbC$ admits the construction of
free monads.
\end{thm}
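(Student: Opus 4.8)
The plan is to construct the free monad on an endomorphism $(X,P)$ in $\bbC$ by working fiberwise over $\bbC_0$, exploiting Proposition~\ref{thm:grothfib}. Since $U_0 : \Mnd(\bbC)_0 \rightarrow \End(\bbC)_0$ is a fibered functor over $\bbC_0$, it suffices by the remarks preceding Theorem~\ref{thm:freemndfrmbicat} to produce, for each object $X$, a left adjoint to the restriction of $U_0$ to the fiber over $X$. The fiber of $\partial_E$ over $X$ is the category of horizontal endomorphisms $P : X \rightarrow X$ (with squares of the form $\iota_P$ as morphisms), and similarly for $\partial_M$; but these fibers are exactly the hom-categories of the relevant 2-categories: the fiber of $\End(\bbC)_0$ over $X$ is $\End(\mathcal{H}_\bbC)$ restricted to the single object $X$, i.e. the category of endomorphisms of $X$ in the horizontal 2-category, and likewise the fiber of $\Mnd(\bbC)_0$ over $X$ is the category of monads on $X$ in $\mathcal{H}_\bbC$. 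Wait — more precisely, the fiber over $X$ of $U_0$ is the forgetful functor from monads on $X$ in $\mathcal{H}_\bbC$ to endomorphisms of $X$ in $\mathcal{H}_\bbC$, and the hypothesis that $\mathcal{H}_\bbC$ admits the construction of free monads gives exactly the desired left adjoint on each such fiber. So by Theorem~\ref{thm:frem2cat} applied to $\catK = \mathcal{H}_\bbC$, for each endomorphism $(X,P)$ we get a monad $(X,P^*)$ with $\iota_P : P \rightarrow P^*$ satisfying the fiberwise universal property.

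Next I would verify the data \eqref{item:adj1}--\eqref{item:adj4} of Remark~\ref{thm:charadj}. Item~\eqref{item:adj1} and the fiberwise part of \eqref{item:adj2} are immediate from the previous paragraph, and the full \eqref{item:adj2} follows from the fiberwise version together with the base-change operation of Proposition~\ref{thm:grothfib}: given a general vertical endomorphism map $(u,\bar u) : (X,P) \rightarrow (X',P')$ with $(X',P')$ a monad, factor it through the cartesian lift of $u$, reducing to the fiber over $X$. For \eqref{item:adj3}, given a horizontal endomorphism map $(F,\phi) : (X,P) \rightarrow (Y,Q)$, I would use Lemma~\ref{thm:verhorbij} together with functoriality of the free-monad assignment in $\mathcal{H}_\bbC$ to produce $(F^*,\phi^*) : (X,P^*) \rightarrow (Y,Q^*)$; concretely $\phi^*$ arises from Remark~\ref{thm:usefulrmk} applied in $\mathcal{H}_\bbC$, using local coproducts to build $P^* F$ as the underlying object of an initial algebra. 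Then \eqref{item:adj4} asks for the universal square $\iota_{(F,\phi)}$, which I would construct using the explicit initial-algebra description: both the square $\iota_{(F,\phi)}$ and the induced $\alpha^\sharp$ are obtained by the universal property of the initial algebra for the endofunctor $G + Q\cdot(-)$ on the appropriate hom-category, invoking that $\bbC_1$ has equalizers and $\partial_0,\partial_1$ preserve them to ensure the relevant hom-categories and functors behave well. Finally, the functoriality condition $(\ast)$ follows from uniqueness in each of these universal properties.

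The main obstacle I anticipate is the role of the equalizer hypothesis on $\bbC_1$: the fiberwise left adjoints exist purely from the 2-categorical hypothesis, but to assemble them into a genuine left adjoint to $U_0$ — and more importantly to establish the horizontal/square data \eqref{item:adj3}--\eqref{item:adj4} coherently — one needs to know that $P^*$ can be built via a limit-colimit construction that is stable under the horizontal structure, and this is where equalizers in $\bbC_1$ preserved by source and target come in. Specifically, the free monad $P^*$ in a 2-category with local coproducts is typically realized as a colimit (the initial algebra of $G + Q\cdot(-)$), but to transport this construction compatibly through the framed-bicategory structure and to check that $\phi^*$ is well-defined on the nose (not merely up to iso), one passes to a canonical representative cut out by equalizers in $\bbC_1$. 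I expect the bulk of the real work — and the part deferred to Section~\ref{sec:technical} — to be exactly this: showing that the fiberwise free monads are chosen coherently enough that the square-level universal property \eqref{item:adj4} holds strictly, which is the content that distinguishes the double-categorical statement from its 2-categorical shadow.
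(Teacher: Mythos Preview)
Your outline for items~\eqref{item:adj1}--\eqref{item:adj3} is essentially sound and close to the paper's approach: the free monad $(X,P^*)$ and the square $\iota_P$ come straight from the 2-categorical hypothesis, and $\phi^*$ is obtained from the universal property in item~(i) of Theorem~\ref{thm:frem2cat} applied to $(Y,Q^*)$. (The paper handles the general vertical universal property in~\eqref{item:adj2} via the cofolding bijection of Lemma~\ref{thm:verhorbij} rather than by factoring through a cartesian lift, but either route works.)

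The genuine gap is in your treatment of item~\eqref{item:adj4} and, correspondingly, your diagnosis of where the equalizer hypothesis enters. You suggest that $\iota_{(F,\phi)}$ and $\alpha^\sharp$ must be \emph{constructed} via an initial-algebra argument, and that equalizers are needed to pin down canonical representatives or to make $\phi^*$ well defined on the nose. Neither is the case. In the paper's proof one simply takes $\iota_{(F,\phi)}$ to be the identity square on $F$ and sets $\alpha^\sharp \defeq \alpha$; the factorization~\eqref{equ:unitaxiom} is then trivial. The entire difficulty is to \emph{verify} that the given square $\alpha$, regarded as $\alpha^\sharp$, satisfies the monad-square compatibility
\[
(\bar u^\sharp,\alpha)\cdot\phi^* \;=\; \phi'\cdot(\alpha,\bar v^\sharp)
\]
as 2-cells $Q^*F \Rightarrow F'P'$ in $\mathcal{H}_\bbC$. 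This is an equation between two squares with the same boundary, and there is no universal property that produces it for free.

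This is exactly where equalizers in $\bbC_1$ are used, and the argument is not about coherence of choices but is a direct equalizer-plus-initiality trick. One forms the equalizer $\theta : E \rightarrowtail Q^*F$ of the two parallel squares above in $\bbC_1$; preservation by $\partial_0,\partial_1$ ensures $E$ is again a horizontal arrow $X\to Y$ and $\theta$ has identity vertical boundaries, so $\theta$ lives in the hom-category $\mathcal{H}_\bbC(X,Y)$. One then checks, using the already-established equations for $\bar u^\sharp$, $\bar v^\sharp$, $\phi^*$ and the hypothesis that $\alpha$ is an endomorphism square, that both $\eta_{Q^*}F : F \to Q^*F$ and $(\nu_{Q^*}F)\cdot Q\theta : QE \to Q^*F$ equalize the pair, hence factor through $E$. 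This equips $E$ with an algebra structure for the endofunctor $F + Q(-)$ on $\mathcal{H}_\bbC(X,Y)$, and since $Q^*F$ is the initial such algebra (Remark~\ref{thm:usefulrmk}, using local coproducts), $\theta$ is an isomorphism and the two squares agree. So the local coproducts and the initial-algebra description are indeed used, but to prove an equation about the given $\alpha$, not to construct $\alpha^\sharp$.
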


The proof of Theorem~\ref{thm:freemndfrmbicat} is given in Section~\ref{sec:technical}. Here,
instead, we apply it to our two running examples.

\begin{prop} \label{thm:freemndexamp} \, \hfill
\begin{enumerate}[(i)]
\item If $\catE$ is a pretopos with parametrized list objects,
the double category $\Sp_\catE$ admits the construction of free monads.
\item If $\catE$ is a  locally cartesian closed category  with disjoint coproducts and
W-types, the double category $\Poly_\catE$ admits the construction of free monads.
\end{enumerate}
\end{prop}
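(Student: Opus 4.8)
The plan is to deduce both statements from Theorem~\ref{thm:freemndfrmbicat}, applied to $\bbC = \Sp_\catE$ and to $\bbC = \Poly_\catE$. Both of these double categories are framed bicategories (as recalled above, following~\cite[Examples~4.4]{ShulmanM:frabmf} and~\cite[Proposition~3.6]{GambinoN:polfpm}), and in both cases $\bbC_0 = \catE$, so it remains to verify the two hypotheses of the theorem: that $\bbC_1$ has equalizers which the functors $\partial_0, \partial_1 \colon \bbC_1 \to \catE$ preserve, and that the horizontal 2-category has local coproducts and admits the construction of free monads.

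I would dispose of the horizontal condition first. By Examples~\ref{thm:span} and~\ref{thm:poly}, the horizontal 2-category of $\Sp_\catE$ (resp.\ of $\Poly_\catE$) is the bicategory of spans of Example~\ref{thm:span2cat} (resp.\ the bicategory of polynomials of Example~\ref{thm:poly2cat}). That it admits the construction of free monads is then exactly Proposition~\ref{thm:key2cat}(i) (resp.\ (ii)), whose hypotheses coincide with those of the present statement. For local coproducts: in the span case the hom-category from $X$ to $Y$ is $\catE/(X \times Y)$, which has finite coproducts since $\catE$, being a pretopos, does; in the polynomial case the coproduct of two polynomials $X \leftarrow \bar F \to F \to Y$ and $X \leftarrow \bar G \to G \to Y$ is $X \leftarrow \bar F + \bar G \to F + G \to Y$, and the two coproduct injections are cartesian morphisms because, in a locally cartesian closed category with disjoint coproducts, coproduct decompositions are disjoint and stable under pullback.

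For equalizers in $\bbC_1$, the span case is immediate: here $\bbC_1$ is the functor category $[\mathcal{J}, \catE]$, where $\mathcal{J}$ is the shape $\bullet \leftarrow \bullet \to \bullet$ indexing spans, so equalizers exist and are computed pointwise (as $\catE$ has finite limits), and $\partial_0, \partial_1$ are evaluation functors, hence preserve them. For $\Poly_\catE$ more care is required: $\bbC_1$ is now only the wide subcategory of $[\mathcal{J}', \catE]$, with $\mathcal{J}'$ the shape $\bullet \leftarrow \bullet \to \bullet \to \bullet$ of polynomials, spanned by the natural transformations whose middle square is a pullback, and the pointwise equalizer need not belong to it. Given a parallel pair $f, g \colon P \to P'$ in $\bbC_1$ with $P = (X \xleftarrow{\sigma} \bar F \xrightarrow{\theta} F \to Y)$, I would form the equalizer by letting $X_E$ and $Y_E$ be the equalizers of the outer components of $f$ and $g$, letting $F_E \hookrightarrow F$ be the intersection of the equalizer of the two maps $F \to F'$, the preimage of $Y_E$, and $\radj{\theta}$ applied to the equalizer of the two maps $\bar F \to \bar F'$ meeting the preimage of $X_E$ (the functor $\radj{\theta}$ being available because $\catE$ is locally cartesian closed), and finally setting $\bar F_E \defeq \bar F \times_F F_E$ with the induced legs. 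By construction the comparison $E \to P$ is a cartesian morphism equalizing $f$ and $g$; its universal property follows since any $\bbC_1$-morphism $Z \to P$ equalizing $f$ and $g$ has $\bar F$-component $\bar F \times_F Z_F$ by cartesianness, and the equalizing conditions force its $F$-component through $F_E$ by the adjunction $\pbk{\theta} \dashv \radj{\theta}$ on subobjects. Since the outer components of $E$ are by construction the equalizers of those of $f$ and $g$, the functors $\partial_0, \partial_1$ preserve this equalizer.

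The one genuinely nonroutine point is the construction of equalizers in $\bbC_1$ for $\Poly_\catE$: the cartesianness condition on squares makes the naive pointwise equalizer fail, so one must instead carve out the correct subobject of $F$ using the locally cartesian closed structure and then check universality. Everything else is a direct invocation of Proposition~\ref{thm:key2cat} and of the general Theorem~\ref{thm:freemndfrmbicat}.
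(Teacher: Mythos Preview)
Your approach coincides with the paper's: both parts are deduced from Theorem~\ref{thm:freemndfrmbicat}, with the horizontal free-monad hypothesis supplied by Proposition~\ref{thm:key2cat}. The paper's own proof is extremely terse on the remaining hypotheses. For~(i) it notes that $\bbC_1$ is a category of internal presheaves, which is exactly your functor-category argument. For~(ii) it offers only the phrase ``pullbacks preserve equalizers'' and says nothing about local coproducts.

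You have been more careful on the polynomial case, and with reason. As you observe, the cartesianness constraint on squares means the pointwise equalizer in the ambient diagram category need not lie in $(\Poly_\catE)_1$: if $f_2(a)=g_2(a)$ for some $a\in F$ but $f_1$ and $g_1$ differ on the fibre $\theta^{-1}(a)$, the naive equalizer fails the pullback condition at that fibre. Your construction---restricting $F_E$ further by intersecting with $\radj{\theta}$ of the subobject where $f_1,g_1$ agree and $\sigma$ lands in $X_E$, then setting $\bar F_E=\bar F\times_F F_E$---is the right fix, and your sketch of the universal property via the adjunction $\pbk{\theta}\dashv\radj{\theta}$ on subobjects is sound. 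You also verify local coproducts explicitly, which the paper leaves implicit.

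In short: identical strategy, but you have filled in nontrivial details that the paper elides. The one-line justification in the paper presumably gestures at an argument like yours, but does not supply it.
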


\begin{proof} For both (i) and (ii), we apply Theorem~\ref{thm:freemndfrmbicat}.
For (i), the hypotheses on equalizers are verified because in this case the
category $\bbC_1$ is a category of internal presheaves, in which equalizers 
exist and are computed pointwise. For (ii), the hypotheses on equalizers 
are also satisfied, since pullbacks preserve equalizers. For both items, 
the existence of free monads in the horizontal 2-categories is established in
Proposition~\ref{thm:key2cat}.
\end{proof}

\section{Proof of the Main Theorem}
\label{sec:technical}

Let $\bbC$ be a double category satisfying the hypotheses of Theorem~\ref{thm:freemndfrmbicat}.
We use the characterization of free
monads in a 2-category given in Theorem~\ref{thm:frem2cat} to exhibit the data listed in
Remark~\ref{thm:charadj}. For items~\eqref{item:adj1} and~\eqref{item:adj2} of Remark~\ref{thm:charadj}, let $(X,P)$ be
an endomorphism. By the existence of free monads in $\mathcal{H}_\bbC$, we have a monad
$(X,P^*)$ and a square
\[
\xymatrix{
X \ar[r]^P \ar@{=}[d] \ar@{}[dr]|{\iota_P} & X \ar@{=}[d] \\
X \ar[r]_{P^*} & X }
\]
satisfying the equivalent conditions in items~(i) and~(ii) of Theorem~\ref{thm:frem2cat} in $\mathcal{H}_\bbC$.
We then obtain a vertical endomorphism map $(1_X, \iota_P) : (X,P)
\rightarrow (X,P^*)$. We need to show that~$(1_X, \iota_P)$ enjoys the required universal
property. For this, let us consider a vertical endomorphism map~$(u,
\bar{u}) : (X,P) \rightarrow (X', P')$, where~$(X',P')$ is a monad.
Here, $\bar{u}$ is a square of the form
\[
\xymatrix{ X \ar[r]^P \ar[d]_u  \ar@{}[dr]|{\bar{u}}  & X \ar[d]^u \\
X' \ar[r]_{P'} & X'.}
\]
By the cofolding bijection defined in the proof of Lemma~\ref{thm:verhorbij},
we have a horizontal endomorphism map $(\pbk{u}, \phi_u) : (X', P')
\rightarrow (X,P)$, where $\phi_u$ is a square of the form
\[
\xymatrix{
X' \ar[r]^{\pbk{u}} \ar@{=}[d] \ar@{}[drr]|{\phi_u} & X \ar[r]^P & X \ar@{=}[d] \\
X' \ar[r]_{P'} & X' \ar[r]_{\pbk{u}} & X .}
\]
By the universal property in item~(i) of Theorem~\ref{thm:frem2cat} for $(X,P^*)$, there exists a unique square
\[
\xymatrix{
X' \ar[r]^{\pbk{u}} \ar@{=}[d] \ar@{}[drr]|{\phi_u^\sharp} & X \ar[r]^{P^*} & X \ar@{=}[d] \\
X' \ar[r]_{P'} & X' \ar[r]_{\pbk{u}} & X }
\]
such that $(\pbk{u}, \phi_u^\sharp) : (X', P') \rightarrow (X,P^*)$ is
a horizontal monad map and
\[
\xycenter{
X' \ar[r]^{\pbk{u}} \ar@{=}[d] \ar@{}[drr]|{\phi_u} & X \ar[r]^P & X \ar@{=}[d] \\
X' \ar[r]_{P'}  & X' \ar[r]_{\pbk{u}} & X }
\quad = \quad
\xycenter{
X' \ar[r]^{\pbk{u}} \ar@{=}[d]  & X \ar[r]^P \ar@{=}[d] \ar@{}[dr]|{\iota_P} & X \ar@{=}[d] \\
X' \ar[r] \ar@{=}[d] \ar@{}[drr]|{\phi_u^\sharp} & X \ar[r] & X \ar@{=}[d] \\
X' \ar[r]_{P'} & X' \ar[r]_{\pbk{u}} & X  .}
\]
Using again the cofolding bijection of Lemma~\ref{thm:verhorbij}, we obtain the vertical monad
morphism $(u, \bar{u}^\sharp) : (X,P^*) \rightarrow (X', P')$ that factors $(u, \bar{u})$ through
$(1_X, \iota_P)$, as required. By the definition of the bijection and Theorem~\ref{thm:frem2cat},
the square $\bar{u}^\sharp$ satisfies the equations
\begin{equation} \label{equ:transposefirst}
\xycenter{
X \ar@{=}[r] \ar@{=}[d]  \ar@{}[dr]|{\eta_{P^*}} &  X \ar@{=}[d] \\
X \ar[r]   \ar[d]_{u}  \ar@{}[dr]|{\bar{u}^\sharp} & X  \ar[d]^{u} \\
X' \ar[r]_{P'} & X' }  \quad = \quad \xycenter{
X \ar@{=}[r]  \ar[d]_{u} &  X \ar[d]^{u}  \\
X' \ar@{=}[r]  \ar@{=}[d]   \ar@{}[dr]|{\eta_{P'}}  & X'  \ar@{=}[d] \\
X' \ar[r]_{P'} & X' }
\end{equation}
and
\begin{equation}
\label{equ:transposesecond} \xycenter{
X \ar[r]^{P^*}  \ar@{=}[d]  \ar@{}[drr]|{\nu_{P^*}} & X   \ar[r] ^{P} & X \ar@{=}[d]  \\
X \ar[rr] \ar[d]  \ar@{}[drr]|{\bar{u}^\sharp} & & X \ar[d]  \\
X' \ar[rr]_{P'}                  & & Y } \quad = \quad \xycenter{ X
\ar[r]^{P^*} \ar[d]   \ar@{}[dr]|{\bar{u}^\sharp}  & X \ar[r]^{P}
\ar[d] \ar@{}[dr]|{\bar{u}} &
 X \ar[d]  \\
X' \ar[r]         \ar@{=}[d]  \ar@{}[drr]|{\mu_{P'}}   & X' \ar[r]       &    X' \ar@{=}[d] \\
X' \ar[rr]_{P'}      &                        &  X' ,}
\end{equation}
where the square $\nu_{P^*}$ is defined by
\[
\xycenter{
X \ar[r]^{P^*} \ar@{=}[d] \ar@{}[drr]|{\nu_{P^*}} & X \ar[r]^{P} & X  \ar@{=}[d] \\
X \ar[rr]_{P^*} & & X } \quad \defeq \quad
\xycenter{
X \ar[r]^{P^*} \ar@{=}[d]  & X \ar[r]^{P} \ar@{}[dr]|{\iota_P} \ar@{=}[d] & X  \ar@{=}[d] \\
X \ar[r] \ar@{=}[d] \ar@{}[drr]|{\mu_{P^*}} & X \ar[r]  & X \ar@{=}[d] \\
X \ar[rr]_{P^*} & & X .}
\]
For item~\eqref{item:adj3} of Remark~\ref{thm:charadj}, let $(F, \phi) : (X,P) \rightarrow (Y, Q)$ be a horizontal endomorphism map.
Exploiting the universal property in item~(i) of Theorem~\ref{thm:frem2cat} for $(Y,Q^*)$,
we define
\[
\xymatrix{ X \ar[r]^F \ar@{=}[d] \ar@{}[drr]|{\phi^*} & Y
\ar[r]^{Q^*} & Y
\ar@{=}[d] \\
X \ar[r]_{P^*} & X \ar[r]_F & X}
\]
to be the unique square  such that $(F, \phi^*) : (X, P^*)
\rightarrow (Y, Q^*)$ is a horizontal monad map and
\begin{equation}
\label{equ:phistar}
\xycenter{ X \ar[r]^F \ar@{=}[d] \ar@{}[drr]|{\phi} & Y \ar[r]^Q & Y \ar@{=}[d] \\
X \ar[r]  \ar@{=}[d] \ar@{}[dr]|{\iota_{P}} & X \ar[r] \ar@{=}[d] & Y \ar@{=}[d] \\
X \ar[r]_{P^*} & X \ar[r]_{F}  & Y } \quad = \quad \xycenter{ X \ar[r]^F
\ar@{=}[d] & Y \ar[r]^Q  \ar@{=}[d] \ar@{}[dr]|{\iota_Q}
& Y \ar@{=}[d] \\
 X \ar[r] \ar@{=}[d] \ar@{}[drr]|{\phi^*} & Y \ar[r]  & Y
\ar@{=}[d] \\
X \ar[r]_{P^*} & X \ar[r]_F & X.}
\end{equation}
Observe that, by the fact that $(F, \phi^*)$ is a horizontal monad map, we have that
\begin{equation}
\label{equ:phistarfirst} \xycenter{
X \ar[r]^F \ar@{=}[d] & Y \ar@{=}[d]  \ar@{=}[r]  \ar@{}[dr]|{\eta_{Q^*}} & Y \ar@{=}[d] \\
X \ar@{=}[d] \ar[r]  \ar@{}[drr]|{\phi^*} & Y \ar[r] & Y \ar@{=}[d] \\
X \ar[r]_{P^*} & X \ar[r]_F & Y }  \quad = \quad
\xycenter{
X \ar@{=}[r] \ar@{=}[d] \ar@{}[dr]|{\eta_{P^*}} & X \ar@{=}[d] \ar[r]^F & Y \ar@{=}[d] \\
X \ar[r]_{P^*} & X \ar[r]_{F} & Y  }
\end{equation}
and
\begin{equation}
\label{equ:phistarsecond} \xycenter{
X \ar@{=}[d] \ar[r]^F & Y  \ar@{=}[d] \ar@{}[drr]|{\nu_{Q^*}} \ar[r]^{Q^*} & Y    \ar[r]^Q & Y  \ar@{=}[d] \\
X \ar@{=}[d] \ar[r] \ar@{}[drrr]|{\phi^*} & Y  \ar[rr] & & Y \ar@{=}[d] \\
X \ar[rr]_{P^*} & & X  \ar[r]_F &   Y} \quad = \quad \xycenter{
X \ar@{=}[d]\ar[r]^F & Y \ar@{}[d]|{\phi^*}\ar[r]^{Q^*} & Y \ar@{=}[d]\ar[r]^{Q} & Y \ar@{=}[d] \\
X \ar@{=}[d]\ar[r] & X \ar@{=}[d]\ar[r] & Y \ar@{}[d]|{\phi}\ar[r] & Y \ar@{=}[d]\\
X \ar[r] \ar@{=}[d]  \ar@{}[drr]|{\nu_{P^*}} & X \ar[r]  & X \ar[r]  \ar@{=}[d] & Y  \ar@{=}[d]  \\
X \ar[rr]_{P^*} & & X \ar[r]_{F} & Y .}
\end{equation}
In particular, \eqref{equ:phistarsecond} holds by the definitions of $\nu_{P^*}$ and $\nu_{Q^*}$, the
first axiom for a horizontal monad map and~\eqref{equ:phistar}. For item~\eqref{item:adj4} of
Remark~\ref{thm:charadj}, the required universal endomorphism square needs to have
the form
\[
\xymatrix{ (X,P) \ar[r]^{(F, \phi)} \ar[d]_{(1_X, \iota_P)}
\ar@{}[dr]|{\iota_{(F,\phi)}}
& (Y,Q)  \ar[d]^{(1_Y, \iota_Q)} \\
(X,P^*) \ar[r]_{(F, \phi^*)} & (Y, Q^*).}
\]
Therefore, $\iota_{(F,\phi)}$ has to be a square in $\bbC$ of the
form
\[
\xymatrix{ X \ar[r]^F \ar@{=}[d] \ar@{}[dr]|{\iota_{(F,\phi)}}  & Y \ar@{=}[d] \\
X \ar[r]_F & Y }
\]
and satisfy the equation
\begin{equation}
\label{equ:previous}
\xycenter{
X \ar[r]^F \ar@{=}[d] \ar@{}[drr]|{\phi} & Y \ar[r]^Q & Y \ar@{=}[d] \\
X \ar[r] \ar@{=}[d] \ar@{}[dr]|{\iota_P} & X \ar[r] \ar@{=}[d] \ar@{}[dr]|{\iota_{(F,\phi)}}  & Y \ar@{=}[d] \\
X \ar[r]_{P^*} & X \ar[r]_F & Y } \quad = \quad \xycenter{
X \ar[r]^F \ar@{=}[d]  \ar@{}[dr]|{\iota_{(F,\phi)}}  & Y \ar[r]^Q \ar@{=}[d] \ar@{}[dr]|{\iota_Q} & Y \ar@{=}[d] \\
X \ar[r] \ar@{=}[d] \ar@{}[drr]|{\phi^*} & Y \ar[r] & Y \ar@{=}[d] \\
X \ar[r]_{P^*} & X \ar[r]_F & Y .}
\end{equation}
We define $\iota_{(F,\phi)}$ to be the identity square on $F$, so
that~\eqref{equ:previous} above is verified by~\eqref{equ:phistar}.
To verify the universal property, we need to show that for an endomorphism square
\[
\xymatrix@C=8ex{
(X,P) \ar[r]^{(F,\phi)} \ar[d]_{(u, \bar{u})} \ar@{}[dr]|{\alpha} & (Y, Q) \ar[d]^{(v, \bar{v})} \\
(X', P') \ar[r]_{(F',\phi')} & (Y', Q'), }
\]
there exists a unique monad square
\[
\xymatrix@C=8ex {
(X,P^*) \ar[r]^{(F,\phi^*)} \ar[d]_{(u, \bar{u}^\sharp)}  \ar@{}[dr]|{\alpha^\sharp} & (Y,Q^*)
\ar[d]^{(v, \bar{v}^\sharp)}  \\
(X',P') \ar[r]_{(F',\phi')}  & (Y',Q') }
\]
satisfying
\begin{equation}
\label{equ:unitaxiom} \vcenter{\hbox{ \xymatrix@C=8ex{
(X,P) \ar[r]^{(F,\phi)} \ar[d]_{(u, \bar{u})} \ar@{}[dr]|{\alpha}  & (Y,Q) \ar[d]^{(v,\bar{v})} \\
(X',P') \ar[r]_{(F',\phi')} & (Y',Q') }}} \quad = \quad
\vcenter{\hbox{ \xymatrix @C=8ex{
(X,P) \ar[r]^{(F,\phi)} \ar[d]_{(1_X, \iota_P)} \ar@{}[dr]|{\iota_{(F,\phi)}}  & (Y,Q) \ar[d]^{(1_Y,\iota_Q)}  \\
(X,P^*) \ar[r] \ar[d]_{(u, \bar{u}^\sharp)}  \ar@{}[dr]|{\alpha^\sharp} & (Y,Q^*) \ar[d]^{(v, \bar{v}^\sharp)} \\
(X',P') \ar[r]_{(F',\phi')} & (Y', Q'). }}}
\end{equation}
First of all, observe that $\alpha$ is a square in $\bbC$ of the
form
\[
\xymatrix{ X \ar[r]^F \ar[d]_{u} \ar@{}[dr]|{\alpha} & Y \ar[d]^{v} \\
X' \ar[r]_{F'} & Y' }
\]
which satisfies the compatibility condition
\begin{equation}
\label{equ:hypothesisalpha} \xycenter{
X \ar[r]^F  \ar@{=}[d]   \ar@{}[drr]|{\phi}  & Y \ar[r]^Q            & Y \ar@{=}[d]  \\
X \ar[r]  \ar[d]  \ar@{}[dr]|{\bar{u}} & X \ar[d]  \ar[r]
\ar@{}[dr]|{\alpha} & Y
\ar[d]   \\
X' \ar[r]_{P'}             & X' \ar[r]_{F'}             & Y' } \quad
= \quad \xycenter{
X \ar[r]^F   \ar[d]   \ar@{}[dr]|{\alpha}  & Y \ar[r]^Q  \ar@{}[dr]|{\bar{v}}   \ar[d]       & Y \ar[d]  \\
X' \ar[r]  \ar@{=}[d]  \ar@{}[drr]|{\phi'} & Y'  \ar[r]   & Y' \ar@{=}[d] \\
X' \ar[r]_{P'}              & X' \ar[r]_{F'}             & Y'. }
\end{equation}
The required monad square $\alpha^\sharp$ has to be a square in $\bbC$ of the form
\[
\xymatrix{ X \ar[r]^F \ar[d]_{u} \ar@{}[dr]|{\alpha^\sharp} & Y \ar[d]^{v} \\
X' \ar[r]_{F'} & Y' }
\]
satisfying the compatibility condition
\begin{equation}
\label{equ:compatibilityfinal} \xycenter{
X \ar[r]^F  \ar@{=}[d]   \ar@{}[drr]|{\phi^*}  & Y \ar[r]^{Q^*}            & Y \ar@{=}[d]  \\
X \ar[r]  \ar[d]  \ar@{}[dr]|{\bar{u}^\sharp} & X \ar[d]  \ar[r]
\ar@{}[dr]|{\alpha^\sharp} & Y
\ar[d]   \\
X' \ar[r]_{P'}             & X' \ar[r]_{F'}             & Y' } \quad
= \quad \xycenter{
X \ar[r]^F   \ar[d]   \ar@{}[dr]|{\alpha^\sharp}  & Y \ar[r]^{Q^*}  \ar@{}[dr]|{\bar{v}^\sharp}   \ar[d]
& Y \ar[d]  \\
X' \ar[r]  \ar@{=}[d]  \ar@{}[drr]|{\phi'} & Y'  \ar[r]   & Y' \ar@{=}[d] \\
X' \ar[r]_{P'}              & X' \ar[r]_{F'}             & Y'. }
\end{equation}
We define $\alpha^\sharp \defeq  \alpha$, so that
Equation~\eqref{equ:unitaxiom} holds trivially, since $\iota_{(F,\phi)}$ is the identity.

To complete the verification of the universal property of $\iota_{(F, \phi)}$,
it only remains to show that Equation~\eqref{equ:compatibilityfinal} holds.
The idea is to consider the sub-horizontal arrow $E$ of $Q^*  F$ for which~\eqref{equ:compatibilityfinal}
and show that $E$ must be isomorphic to~$Q^*  F$. More precisely, let us define the
horizontal arrow $E : X \rightarrow Y$ via the following equalizer in
the category $\bbC_1$ of horizontal arrows and squares:
\begin{equation}
\label{equ:equalizer} \xymatrix@C=8ex{
 E \, \ar@{>->}[r]^-{\theta} & Q^*  F \ar@<1ex>[r]^{(\bar{u}^\sharp, \alpha) \, \phi^* }
 \ar@<-1ex>[r]_{\phi' \, (\alpha, \bar{v}^{\sharp})}
 & F'   P' \, .}
\end{equation}
Since the vertical boundaries of the squares in~\eqref{equ:compatibilityfinal} are equal, 
the assumption that the source and target functors $\partial_0, \partial_1 : \bbC_1
\rightarrow \bbC_0$ preserve equalizers implies that~$E$ is indeed a
horizontal arrow from $X$ to $Y$ and that $\theta$ has vertical boundaries given by
identity morphisms.  The commutativity of the equalizer
diagram in~\eqref{equ:equalizer} can be expressed as the equation
\begin{equation}
\label{equ:indhyp} \xycenter{
X \ar[rr]^E \ar@{=}[d] \ar@{}[drr]|{\theta} & & Y \ar@{=}[d] \\
X \ar@{=}[d] \ar[r] \ar@{}[drr]|{\phi^*} & Y \ar[r] & Y \ar@{=}[d] \\
X \ar[r] \ar[d] \ar@{}[dr]|{\bar{u}^\sharp} & X \ar[d] \ar[r] \ar@{}[dr]|{\alpha} & Y \ar[d] \\
X' \ar[r]_{P'} & X' \ar[r]_{F'} & Y' }
 \quad = \quad \xycenter{
X \ar[rr]^E \ar@{=}[d] \ar@{}[drr]|{\theta} & & Y \ar@{=}[d] \\
X \ar[r] \ar[d] \ar@{}[dr]|{\alpha} & Y \ar[d] \ar[r] \ar@{}[dr]|{\bar{v}^\sharp} & Y \ar[d] \\
X' \ar[r] \ar@{=}[d] \ar@{}[drr]|{\phi'} & Y' \ar[r] & Y' \ar@{=}[d] \\
X' \ar[r]_{P'} & X' \ar[r]_{F'} & Y' .}
\end{equation}
To prove Equation~\eqref{equ:compatibilityfinal} we show that
$\theta : E \rightarrow Q^* F$ is an isomorphism. For this, we
exploit the fact (observed in Remark~\ref{thm:usefulrmk})
that $Q^*  F : X \rightarrow Y$ is the initial
algebra for the endofunctor
\begin{eqnarray}
  \mathcal{H}_\bbC(X,Y) & \longrightarrow & \mathcal{H}_\bbC(X,Y)  \label{equ:endouseful} \\
  (-)  & \longmapsto & F  + Q (-) \, , \notag
\end{eqnarray}
where $\mathcal{H}_\bbC(X,Y)$ denotes the hom-category of horizontal
arrows from $X$ to $Y$ of the horizontal 2-category
$\mathcal{H}_\bbC$ of $\bbC$. Note that here we are using our assumption that
 $\mathcal{H}_\bbC$ has local coproducts.
By the initiality of~$Q^* F$, in
order to show that~$\theta : E \rightarrow Q^*  F$ is an
isomorphism, it is sufficient to show that~$E$ admits an algebra
structure for the endofunctor in~\eqref{equ:endouseful}. The
required algebra structure is given by the copair~$(\lambda, \rho) :
F + Q E \rightarrow E$, where $\lambda : F \rightarrow E$ and
$\rho : Q E \rightarrow E$ are determined, via the universal
property of the equalizer $E$, by the commutative diagrams
\begin{equation}
\label{equ:diag1} \xymatrix@C=8ex{
 F \, \ar[r]^-{\eta_{Q^*}  F} & Q^*  F \ar@<1ex>[r]^{(\bar{u}^\sharp, \alpha) \, \phi^* }
 \ar@<-1ex>[r]_{\phi' \, (\alpha, v^{\sharp})}
 & F'   P' }
\end{equation}
and
\begin{equation}
\label{equ:diag2} \xymatrix@C=8ex{
 Q  E \, \ar[r]^-{Q \, \theta} &
 Q  Q^*  F \ar[r]^-{\nu_{Q^*} \, F}
  & Q^*  F \ar@<1ex>[r]^{(\bar{u}^\sharp, \alpha) \, \phi^* } \ar@<-1ex>[r]_{\phi' \, (\alpha, v^{\sharp})}
 & F'   P' \, ,}
\end{equation}
respectively. It remains to show that the diagrams
in~\eqref{equ:diag1} and~\eqref{equ:diag2} commute. The
commutativity of~\eqref{equ:diag1} amounts to the equation
\begin{equation}
\label{Id} \xycenter{
X \ar@{=}[d]\ar[r]^F & Y  \ar@{=}[d] \ar@{}[dr]|{\eta_{Q^*}}  \ar@{=}[r] & Y \ar@{=}[d] \\
X \ar[r]  \ar@{=}[d]   \ar@{}[drr]|{\phi^*}  & Y \ar[r]       & Y \ar@{=}[d] \\
X \ar[r]  \ar[d]   \ar@{}[dr]|{\bar{u}^\sharp} & X \ar[d] \ar[r]
\ar@{}[dr]|{\alpha} & Y
\ar[d]   \\
X' \ar[r]_{P'}            & X'  \ar[r]_{F'}             & Y' } \quad
= \quad \xycenter{
X  \ar[r]^F \ar@{=}[d] & Y \ar@{=}[d]  \ar@{=}[r] \ar@{}[dr]|{\eta_{Q^*}} & Y \ar@{=}[d]  \\
X \ar[r]   \ar[d]   \ar@{}[dr]|{\alpha}  & Y \ar[r]   \ar@{}[dr]|{\bar{v}^\sharp}   \ar[d]       & Y \ar[d]  \\
X' \ar[r]  \ar@{=}[d]  \ar@{}[drr]|{\phi'} & Y'  \ar[r]   & Y' \ar@{=}[d] \\
X' \ar[r]_{P'}          & X' \ar[r]_{F'}           & Y'.}
\end{equation}
Starting from the left-hand side of Equation~\eqref{Id}, we apply
Equation~\eqref{equ:phistarfirst} in the top two rows and get
\[
\xymatrix{
X \ar@{=}[r] \ar@{=}[d]  \ar@{}[dr]|{\eta_{P^*}} & X \ar@{=}[d] \ar[r]^F  & Y \ar@{=}[d] \\
X \ar[r]  \ar[d] \ar@{}[dr]|{\bar{u}^\sharp} & X \ar[r]  \ar[d] \ar@{}[dr]|{\alpha} &  Y  \ar[d]  \\
X' \ar[r]_{P'} & X' \ar[r]_{F'} & Y' .}
\]
Then, Equation~\eqref{equ:transposefirst} gives us
\begin{equation}
\label{equ:firstresult} \xycenter{
X  \ar@{=}[r] \ar[d] & X \ar[r]^F  \ar[d]  \ar@{}[dr]|{\alpha}  & Y \ar[d] \\
X' \ar@{=}[r] \ar@{}[dr]|{\eta_{P'}} \ar@{=}[d] & X' \ar[r] \ar@{=}[d]  & Y' \ar@{=}[d]  \\
X' \ar[r]_{P'} & X' \ar[r]_{F'} & Y' . }
\end{equation}
Considering now the right-hand side of Equation~\eqref{Id}, an application of the analogue of
Equation~\eqref{equ:transposefirst} for $\bar{v}^\sharp$ gives us
\[
\xymatrix{
X \ar[r]^F \ar[d] \ar@{}[dr]|{\alpha} & Y \ar@{=}[r] \ar[d]  & Y \ar[d] \\
X' \ar[r]  \ar@{=}[d] & Y' \ar@{=}[r]  \ar@{=}[d]  \ar@{}[dr]|{\eta_{Q'}} & Y'  \ar@{=}[d] \\
X' \ar[r]   \ar@{=}[d] \ar@{}[drr]|{\phi'} & Y' \ar[r]  & Y'  \ar@{=}[d]  \\
X' \ar[r]_{P'}  & X' \ar[r]_{F'}  & Y'. }
\]
An application of the second axiom for a horizontal monad map for
$(F', \phi')$ then gives us exactly~\eqref{equ:firstresult}, as
required. It remains to show the commutativity of the diagram in~\eqref{equ:diag2},
which amounts to the equation
\begin{equation}
\label{Q+} \xycenter{ P_0 \ar[rr]^E \ar@{=}[d] \ar@{}[drr]|{\theta}
& &
Y \ar@{=}[d] \ar[r]^Q & Y \ar@{=}[d] \\
X  \ar@{=}[d]  \ar[r] & Y \ar@{=}[d] \ar[r]  \ar@{}[drr]|{\nu_{Q^*}} & Y \ar[r] & Y  \ar@{=}[d] \\
X \ar[r]  \ar@{=}[d]   \ar@{}[drrr]|{\phi^*}  & Y \ar[rr]        &   & Y \ar@{=}[d]  \\
X \ar[r]  \ar[d]   \ar@{}[dr]|{\bar{u}^\sharp} & X \ar[d] \ar[rr]
\ar@{}[drr]|{\alpha} & & Y
\ar[d]   \\
X' \ar[r]_{P'}            & X' \ar[rr]_{F'}         &     & Y' }
\quad = \quad
 \xycenter{
 X \ar[rr]^E \ar@{=}[d] \ar@{}[drr]|{\theta} & & Y \ar@{=}[d]
 \ar[r]^Q & Y \ar@{=}[d] \\
 X \ar@{=}[d]  \ar[r]   & Y \ar@{=}[d]  \ar[r]  \ar@{}[drr]|{\nu_{Q^*}}   & Y \ar[r]   & Y \ar@{=}[d]  \\
X \ar[r]   \ar[d]   \ar@{}[dr]|{\alpha}  & Y \ar[rr]   \ar@{}[drr]|{\bar{v}^\sharp}   \ar[d]    &   & Y \ar[d]  \\
X' \ar[r]  \ar@{=}[d]  \ar@{}[drrr]|{\phi'} & Y'  \ar[rr]  &   & Y' \ar@{=}[d] \\
X' \ar[r]_{P'}          & X' \ar[rr]_{F'}   &        & Y' .}
\end{equation}
Starting from the left-hand side of Equation~\eqref{Q+}, we use
Equation~\eqref{equ:phistarsecond} in the second and third row to
get
\[
 \xycenter{
 X \ar[rr]^E \ar@{=}[d] \ar@{}[drr]|{\theta} & &
Y \ar@{=}[d] \ar[r]^Q & Y \ar@{=}[d] \\
X \ar@{=}[d]\ar[r] & Y \ar@{}[d]|{\phi^*}\ar[r]  & Y \ar@{=}[d]\ar[r]  & Y \ar@{=}[d] \\
X \ar@{=}[d]\ar[r] & X \ar@{=}[d]\ar[r] & Y \ar@{}[d]|{\phi}\ar[r] & Y \ar@{=}[d]\\
X \ar@{=}[d]\ar[r]  & X \ar@{}[d]|{\nu_{P^*}}\ar[r]  &
X\ar@{=}[d]\ar[r]  &
Y \ar@{=}[d] \\
X \ar@{}[drr]|{\bar u^\sharp}\ar[d]\ar[rr]  && X
\ar@{}[rd]|{\alpha}\ar[d]\ar[r]  &
Y \ar[d] \\
X' \ar[rr]_{P'} && X' \ar[r]_{F'} & Y'  .}
\]
We then use Equation~\eqref{equ:transposesecond} in the bottom two
rows and obtain
\[
\xymatrix{ X \ar[rr]^E \ar@{=}[d] \ar@{}[drr]|{\theta} & &
Y \ar@{=}[d] \ar[r]^Q & Y \ar@{=}[d] \\
X \ar@{=}[d]\ar[r]  & Y \ar@{}[d]|{\phi^*}\ar[r]  & Y \ar@{=}[d]\ar[r]  & Y \ar@{=}[d] \\
X \ar@{=}[d]\ar[r] & X \ar@{=}[d]\ar[r] & X \ar@{}[d]|{\phi}\ar[r] &
Y \ar@{=}[d]
\\
X \ar@{}[rd]|{\bar{u}^\sharp}  \ar[d]\ar[r]  &  X \ar@{}[rd]|{\bar{u}}\ar[d] \ar[r]
& X \ar@{}[rd]|{\alpha}\ar[d]\ar[r]  & Y \ar[d]  \\
X' \ar@{}[drr]|{\mu_{P'}}\ar@{=}[d] \ar[r]  & X' \ar[r] & X' \ar@{=}[d]\ar[r]  &  Y' \ar@{=}[d] \\
X' \ar[rr]_{P'} && X' \ar[r]_{F'} & Y'  \, . }
\]
We apply Equation~\eqref{equ:hypothesisalpha}, which is the
assumption that $\alpha$ is an endomorphism square, in  the third
and the fourth row, so as to get
\[
\xycenter{ X \ar[rr]^E \ar@{=}[d] \ar@{}[drr]|{\theta} & &
Y \ar@{=}[d] \ar[r]^Q  & Y \ar@{=}[d] \\
X \ar@{=}[d]\ar[r] & Y \ar@{}[d]|{\phi^*}\ar[r] &
Y \ar@{=}[d]\ar[r]  & Y \ar@{=}[d] \\
X \ar@{}[rd]|{\bar u^\sharp}\ar[d]\ar[r] & X
\ar@{}[rd]|{\alpha}\ar[d]\ar[r] & Y
\ar@{}[rd]|{\bar{v}}\ar[d]\ar[r] & Y \ar[d] \\
X' \ar@{=}[d]\ar[r] & X' \ar@{}[rrd]|{\phi'}\ar@{=}[d] \ar[r]  & Y'
\ar[r]  & Y' \ar@{=}[d]
\\
X' \ar@{}[drr]|{\mu_{P'}}\ar@{=}[d]\ar[r]  & X' \ar[r] & X'
\ar@{=}[d]\ar[r]  &
Y' \ar@{=}[d] \\
X' \ar[rr]_{P'} && X' \ar[r]_{F'} & Y' .}
\]
We now apply Equation~\eqref{equ:indhyp} in the top three rows and we obtain
\[
\xycenter{ X \ar[rr]^E \ar@{=}[d] \ar@{}[drr]|{\theta} & &
Y \ar@{=}[d] \ar[r]^Q & Y \ar@{=}[d] \\
 X \ar@{}[rd]|{\alpha} \ar[d]\ar[r] & Y
\ar[d]\ar@{}[rd]|{\bar{v}^\sharp}
\ar[r] & Y \ar@{}[rd]|{\bar v}\ar[d]\ar[r] & Y \ar[d] \\
X' \ar@{}[rrd]|{\phi'}\ar@{=}[d]\ar[r]  & Y' \ar[r]  & Y'  \ar@{=}[d]\ar[r] & Y' \ar@{=}[d] \\
X' \ar@{=}[d]\ar[r] &  X' \ar@{}[rrd]|{\phi'}\ar@{=}[d] \ar[r]  & Y' \ar[r]  &  Y' \ar@{=}[d]  \\
X' \ar@{}[drr]|{\mu_{P'}}\ar@{=}[d]\ar[r]  & X' \ar[r] & X'
\ar@{=}[d]\ar[r]  &
Y' \ar@{=}[d] \\
X' \ar[rr]_{P'} && Y' \ar[r]_{F'} & Y' .}
\]
We use the first axiom for a horizontal monad map (see item (ii) of
Definition~\ref{thm:mnd}) for~$(F', \phi')$ in the bottom three rows
so as to get
\[
\xymatrix{ X \ar[rr]^E \ar@{=}[d] \ar@{}[drr]|{\theta} & &
Y \ar@{=}[d] \ar[r]^Q & Y \ar@{=}[d] \\
X \ar@{}[dr]|{\alpha} \ar[d]\ar[r] & Y
\ar[d]\ar@{}[dr]|{\bar{v}^\sharp} \ar[r]  &
Y \ar@{}[rd]|{\bar v}\ar[d]\ar[r] & Y \ar[d]  \\
X' \ar@{=}[d]\ar[r]  & Y' \ar@{=}[d]\ar@{}[rrd]|{\mu_{Q'}} \ar[r] & Y'
\ar[r] & Y' \ar@{=}[d] \\
X' \ar@{}[rrrd]|{\phi'}\ar@{=}[d]\ar[r] & Y' \ar[rr]  & & Y'
\ar@{=}[d]
\\
X' \ar[r]_{P'} & X' \ar[rr]_{F'} & & Y' . }
\]
We obtain exactly this diagram also by applying the analogue of
Equation~\eqref{equ:transposesecond} for $\bar{v}^\sharp$ to the
second and third row of the right-hand side of Equation~\eqref{Q+}.
This concludes the proof of Theorem~\ref{thm:freemndfrmbicat}. \hfill \qed

\section*{Acknowledgements}

We are grateful to Richard Garner for drawing our attention to Sam Staton's
characterization of free monads. We are also grateful to Mike Shulman for pointing out that an earlier version of this paper contained an inaccuracy regarding fibered functors  and that the proof of  Theorem~\ref{thm:freemndfrmbicat} requires the hypothesis that the source and target functors preserve 
equalizers. We would also like to thank the editor and the referee for the very rapid handling of the
paper.

\medskip

We gratefully acknowledge the support and hospitality of the Centre de Recerca Matem\`atica 
during the academic year 2007/08, in
occasion of the thematic programme on Homotopy Theory and Higher
Categories. Thomas M.~Fiore was supported at the University of
Chicago by NSF Grant DMS-0501208. At the Universitat Aut\`{o}noma de
Barcelona he was supported by grant SB2006-0085 of the Spanish
Ministerio de Educaci\'{o}n y Ciencia under the Programa Nacional de
ayudas para la movilidad de profesores de universidad e
investigadores espa$\tilde{\text{n}}$oles y extranjeros. Thomas M. Fiore
was also supported by the Max Planck Institut f\"ur Mathematik, and
he thanks MPIM for its kind hospitality. Joachim Kock was partially supported by grants 
MTM2006-11391 and MTM2007-63277 of Spain and SGR2005-00606 of Catalonia.

\bibliographystyle{srtnumbered}

\end{document}